\theoremstyle{plain}
\newtheorem{theorem}{Theorem}[section]
\newtheorem{lemma}[theorem]{Lemma}
\theoremstyle{definition}
\newtheorem{remark}[theorem]{Remark}
\numberwithin{equation}{section}
\def\al{\aligned}
\def\eal{\endaligned}
\def\be{\begin{equation}}
\def\ee{\end{equation}}
\begin{document}

\title[Upper Bound for Hessian Matrices]{An Upper Bound for Hessian Matrices
\\of Positive Solutions of Heat Equations}
\author{Qing Han}
\address{Department of Mathematics\\
University of Notre Dame\\
Notre Dame, IN 46556} \email{qhan@nd.edu}
\author{Qi S. Zhang}
\address{Department of Mathematics\\
University of California, Riverside\\
Riverside, CA 92521}  \email{qizhang@math.ucr.edu}
\begin{abstract}
We prove global and local upper bounds for the Hessian of log
positive solutions of the heat equation on a Riemannian manifold.
The metric is either fixed or evolves under the Ricci flow.
These upper bounds supplement the well-known global lower bound.
\end{abstract}\thanks{The first author acknowledges the support of NSF
Grant DMS-1105321. }
\maketitle

\section{Introduction}

Gradient bounds for log solutions of the heat equation have
appeared in the important papers by Li and Yau \cite{Li-Yau1986}
and by Hamilton \cite{Hamilton1993}. The main result in
\cite{Li-Yau1986} can be regarded as a lower bound of the
Laplacian of log solutions under the assumption that the Ricci
curvature is bounded from below. These bounds came in both global
and local versions. The main result in \cite{Hamilton1993} is a
global lower bound for the Hessian matrices of log solutions under
certain curvature assumptions.  Many applications of these results
have been found in numerous situations. See for example the papers \cite{BCP:1},
\cite{CN:1}, \cite{Ca:1}, \cite{CaH:1},  \cite{CH:1},
\cite{Hamilton1995}, \cite{LiTian1995}, \cite{LNVV:1}, \cite{Ni:1},
\cite{P:1} and also the books
\cite{L:1} and \cite{SY:1}.

In this paper, we derive an upper
bound of the Hessian matrices of log solutions of the heat equation
on manifolds.
We prove both a global and
local version of the bounds which take two different forms and
which are generally sharp in respective cases. While the global
version can be proven by building on the ideas in
\cite{Hamilton1993}, the local version requires additional
quantities and calculations and appears to be unknown even in
the Euclidean setting. We give a unified proof for both
versions.
In addition, we generalize the Hessian bound for log
solutions to the Ricci flow case. Interestingly, the Ricci flow
induces a cancelation effect which makes the curvature assumption
less restrictive than the fixed metric case.

In order to present results, let us fix some notations to be
used throughout the paper. We denote by $M$ a  Riemannian
manifold with metric $g$. For simplicity of presentation, we
always assume that $M$ is a compact manifold without boundary in
this paper. The local bounds clearly hold without this assumption.
The global bounds also hold when one imposes suitable conditions
at infinity. We use $Ric$ and $Rm$ to denote the Ricci and full
curvature tensor respectively. In local coordinates, the metric
$g$ is denoted by $g_{ij}$, the Ricci curvature by
$R_{ij}$ etc and the curvature tensor by $R_{ijkl}$ etc. We use
the convention that $R_{ij} = g^{kl} R_{iklj}$ in local
coordinates. The
Hessian of a function $u$ is written as $u_{ij}$.
If $V$ is a 2-form on $M$ and $\xi$ is a vector field, then in local
coordinates, we use $\xi^T V \xi$ to denote $V(\xi, \xi)$.
The distance function is denoted by $d(x, y)$ in the
fixed metric case and by $d(x, y, t)$ in the Ricci flow case.
A geodesic ball is
denoted by $B(x, r)$ or $B(x, r, t)$ where $x$ is a point in $M$ and
$r$ is the radius, and $t$ is the time when the metric changes.
For $R>0$ and $T>0$, a parabolic cube is defined by
\[
Q_{R,T}(x_0,t_0) =\{(x, t) \, | \, d(x_0, x, t) < R, \, t_0-T < t
\le t_0 \}.
\]
For the fixed metric, we simply have
$$Q_{R,T}(x_0,t_0) = B(x_{0}, R) \times (t_{0} - T, t_{0}].$$
A positive constant is denoted by $c$ and $C$ with or without index, which may change
from line to line.

We first state our upper bound of the Hessian matrix of positive
solutions for a fixed metric.

\begin{theorem}\label{ThmHessfixed} Let $M$ be a Riemannian manifold
with a metric $g$.

$\operatorname{(a)}$ Suppose $u$ is a solution of
$$\partial_{t} u- \Delta u =0\quad \text{in} \ M \times (0, T].$$
Assume $0 < u \leq A$. Then, $$ t (u_{ij}) \leq
u(5+Bt)\left( 1+ \log \frac{A}{u}\right)\quad \text{in} \ M \times (0, T],$$
where $B$
is a nonnegative constant depending only on the
$L^\infty$-norms of curvature tensors
and the gradient of Ricci curvatures.

$\operatorname{(b)}$
Suppose $u$ is a
solution of
$$\partial_{t} u- \Delta u = 0\quad\text{in }Q_{R,T}(x_0,t_0).$$
Assume $0 < u \leq A$. Then,
$$(u_{ij})\leq C u \left( \frac{1}{T}
+ \frac{1}{R^{2}} + B\right) \left(1+\log \frac{A}{u}\right)^{2}
\quad \text{in} \ Q_{\frac{R}{2}, \frac{T}{2}}(x_0,t_0).$$ where
$C$ is a universal constant and $B$ is a nonnegative constant
depending only on the $L^\infty$-norms of curvature tensors and
the gradient of Ricci curvatures in $Q_{R,T}(x_0,t_0)$.
\end{theorem}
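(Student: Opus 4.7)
The strategy is a parabolic maximum principle argument. Set $f=\log(A/u)\ge 0$ and $\psi=1+f\ge 1$; from $u_t=\Delta u$ a direct computation gives $\psi_t=\Delta\psi-|\nabla\psi|^2$. The tensor to control is
\[
P_{ij}:=\frac{u_{ij}}{u}=\psi_i\psi_j-\psi_{ij},
\]
and part $(a)$ asks for the matrix bound $tP_{ij}\le(5+Bt)\psi\,g_{ij}$. The first step is to derive the evolution of $P_{ij}$: starting from $(\partial_t-\Delta)u_{ij}=-[\Delta,\nabla_i\nabla_j]u$, applying the standard scalar curvature commutator, and dividing by $u$ with $u_k/u=-\psi_k$, one obtains after cancellation
\[
\bigl(\partial_t-\Delta+2\psi^k\nabla_k\bigr)P_{ij}=-2R_{ikjl}P^{kl}+R_{ik}P^k{}_j+R_{jk}P^k{}_i+\bigl(\nabla_kR_{ij}-\nabla_iR_{jk}-\nabla_jR_{ik}\bigr)\psi^k,
\]
which is linear in $P$ with curvature-controlled coefficients. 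Unlike Hamilton's equation for $(\log u)_{ij}$, no quadratic good term $\pm|P|^2$ appears; the nonlinearity that would generate it has been absorbed into the conjugate drift $2\psi^k\nabla_k$. I manufacture one by pairing with the Bochner identity
\[
\bigl(\partial_t-\Delta+2\psi^k\nabla_k\bigr)|\nabla\psi|^2=-2|\nabla^2\psi|^2-2\,\mathrm{Ric}(\nabla\psi,\nabla\psi),
\]
whose expansion $|\nabla^2\psi|^2=|\nabla\psi\otimes\nabla\psi-P|^2=|\nabla\psi|^4-2\nabla\psi^TP\nabla\psi+|P|^2$ supplies the desired $-2|P|^2$, and notably $|P|^2\ge(\xi^TP\xi)^2$ for every unit $\xi$.

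For $(a)$ I would apply the scalar parabolic maximum principle to
\[
F(x,t,\xi)=t\bigl(\xi^TP\xi+\alpha|\nabla\psi|^2\bigr)-(5+Bt)\psi,
\]
with $\alpha>0$ a small fixed constant and $\xi$ a unit tangent vector parameter. Since $|\nabla\psi|^2\ge 0$, the inequality $F\le 0$ for every $\xi$ implies the desired matrix bound. Suppose instead $F>0$ somewhere; by compactness of $M$ and of the unit sphere bundle the supremum is attained at some $(x_0,t_0,\xi_0)$ with $t_0>0$, and optimality over $\xi$ forces $\xi_0$ to be a top eigenvector of $P$. Extending $\xi_0$ to be parallel at $x_0$, $F$ becomes a scalar function with a spacetime local max there, so $\partial_tF\ge 0$, $\Delta F\le 0$ and $\nabla F=0$. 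Plugging in both evolution equations, invoking Hamilton's gradient estimate $|\nabla\psi|^2\le C\psi(1/t+B)$ to control $\nabla\mathrm{Ric}\!\cdot\!\nabla\psi$ terms, and absorbing the remaining curvature-Hessian contractions via Young's inequality into the good quadratic $-2\alpha t|P|^2\le-2\alpha t(\xi_0^TP\xi_0)^2$ together with the explicit $-5\psi/t^2$ coming from $\partial_t[(5+Bt)\psi]$, one finds that the parabolic operator applied to $F$ is strictly negative at $(x_0,t_0)$---a contradiction. The constants $5$ and $B$ in the statement are calibrated so this balance closes.

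For $(b)$, the same argument is multiplied by a spacetime cutoff $\eta$ supported in $Q_{R,T}(x_0,t_0)$ and equal to $1$ on $Q_{R/2,T/2}$, constructed from the distance function via Laplace comparison (the Ricci lower bound needed is absorbed into $B$) and satisfying the standard bounds $|\nabla\eta|^2/\eta\le C/R^2$, $|\Delta\eta|\le C(R^{-2}+B)$, $|\partial_t\eta|\le C/T$. The cutoff generates extra cross terms of the form $t\xi^TP\xi\cdot|\nabla\eta|/\sqrt\eta$ when $(\partial_t-\Delta+2\psi^k\nabla_k)$ is applied to $\eta F$; these are absorbed against $-2\alpha\eta t|P|^2$ by Young's inequality, each absorption trading one factor of $\xi^TP\xi$ for $(R^{-2}+T^{-1}+B)\psi$. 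This trade is what forces the local bound to be quadratic in $\psi$, producing the factor $(1+\log(A/u))^2$ and the constant $C(T^{-1}+R^{-2}+B)$. The main obstacle throughout, and what makes the result nontrivial even in Euclidean space, is the absence of a quadratic good term in the evolution of $P$ alone; the coupling to the Bochner identity via the auxiliary term $\alpha|\nabla\psi|^2$ in $F$ is what the authors mean when they write that the local version ``requires additional quantities and calculations.''
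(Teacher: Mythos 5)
Your part (a) takes a genuinely different route from the paper: you work with the unnormalized quotient $u_{ij}/u$ (which satisfies a \emph{linear} equation under the conjugated operator) and manufacture the quadratic good term from the Bochner identity for $|\nabla\psi|^2$, whereas the paper normalizes by the entropy density, setting $V=\frac{(u_{ij})}{u(1-f)}$ and $W=\frac{\nabla u\otimes\nabla u}{u^2(1-f)^2}$ with $f=\log(u/A)$, and extracts the good term $H\ge 2\lambda^2/\alpha^2$ for $\lambda=\xi^T(4V+W)\xi$ from the evolution of $W$. Your scheme for (a) can probably be closed, but you should be explicit about its most delicate point: expanding $|\nabla^2\psi|^2=|P-\nabla\psi\otimes\nabla\psi|^2$ gives $-2\alpha t|P|^2-2\alpha t|\nabla\psi|^4+4\alpha t\,\nabla\psi^TP\nabla\psi$, and when $P$ is nearly rank one and aligned with $\nabla\psi$ (so $\xi_0^TP\xi_0\approx|\nabla\psi|^2$) these three terms cancel \emph{exactly} and your advertised good quadratic vanishes. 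In that regime the contradiction must come instead from the term $-(5+Bt)|\nabla\psi|^2$ produced by $(\partial_t-\Delta+2\nabla\psi\cdot\nabla)\psi=|\nabla\psi|^2$ together with Hamilton's sharp bound $t|\nabla\psi|^2\le(1+2Kt)\psi$; this is where the constant $5$ is really spent, not in the Young absorptions you describe.

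For part (b) there is a genuine gap, and the paper's own sharpness example $u=e^{ax+a^2t}$ exposes it. For that solution $\nabla^2\psi\equiv 0$, so your good term $-2\alpha\eta t|\nabla^2\psi|^2$ vanishes identically while $\xi^TP\xi=a^2\sim\psi^2$; there is nothing to absorb the cutoff cross terms into, and ``the same argument multiplied by a cutoff'' is aimed at the linear-in-$\psi$ bound, which is false locally. Moreover, whenever a genuine term $-c\,\eta t(\xi^TP\xi)^2$ \emph{is} available, Young's inequality against it returns only one power of $\psi$ (the residue of absorbing $t\,\xi^TP\xi\cdot|\nabla\psi||\nabla\eta|/\sqrt{\eta}$ is $\sim t|\nabla\psi|^2/R^2\lesssim t\psi^2(T^{-1}+R^{-2})/R^2$, whence $\xi^TP\xi\lesssim(T^{-1}+R^{-2})\psi$), so your claimed ``trade'' cannot be the source of the quadratic power. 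The second power must be built into the auxiliary function itself. The paper does this by keeping the entropy normalization (so that $H\ge 2\lambda^2/\alpha^2$ holds uniformly, the degenerate regime being rescued by the term $(1-f)w\lambda$ which is itself quadratic in $\lambda$ when $P\approx\nabla\psi\otimes\nabla\psi$) and by adding $\beta f\,g$ with $\beta\sim R^{-2}$ to the localized $2$-form: the term $\beta\psi L_1f\gtrsim\beta\psi|\nabla f|^2$ absorbs the gradient terms generated by the cutoff, and undoing the two normalizations at the end yields exactly $(1-\!f)^2=(1+\log(A/u))^2$. You would need to redesign your localized quantity (e.g., subtract a multiple of $\psi^2$, whose evolution under the conjugated operator produces $-2C(\psi-1)|\nabla\psi|^2$) and redo the balance; as written, part (b) does not follow.
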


In Theorem \ref{ThmHessfixed}, Part (a) is
the global version and Part (b) the local version. We point out that
different powers of $1+\log(A/u)$ appear in the right-hand sides. We will
illustrate by an example that the extra power in the local version is
optimal. This phenomenon adds more variety to the long list of differential
Harnack inequalities.

We remark that the global estimate in Part (a) can be obtained
as a consequence of the lower
Hessian estimate and the upper Laplace estimate by Hamilton
\cite{Hamilton1993}. Arguments in this paper work for both global and local
estimates, and they extend to the Ricci flow case.

Next, we state our upper bound for the heat equation coupled with the
Ricci flow. It has a similar form to the fixed metric case.

\begin{theorem}\label{Thm-LogHessRic} Let $M$ be a Riemannian manifold
with a family of metrics $g=g(t)$ satisfying
$$\partial_tg=-2Ric\quad\text{in }M\times (0,T].$$

$\operatorname{(a)}$ Suppose $u$ is a solution of
$$\partial_{t} u- \Delta u =0\quad \text{in} \ M \times (0, T].$$
Assume $0 < u \leq A$. Then, $$ t (u_{ij}) \leq  u (18+Bt)\left( 1+ \log \frac{A}{u}\right)\quad \text{in} \ M \times (0, T],$$
where $B$
is a nonnegative constant depending only on the $L^\infty$-norms of curvature tensors.

$\operatorname{(b)}$
Suppose $u$ is a
solution of
$$\partial_{t} u- \Delta u = 0\quad\text{in }Q_{R,T}(x_0,t_0).$$
Assume $0 < u
\leq A$. Then,
$$ (u_{ij}) \leq C u \left( \frac{1}{T} + \frac{1}{R^{2}} + B\right) \left(1+\log \frac{A}{u}\right)^{2}
\quad \text{in} \ Q_{\frac{R}{2}, \frac{T}{2}}(x_0,t_0).$$ where
$C$ is a universal constant and $B$ is a nonnegative constant
depending only on the $L^\infty$-norms of curvature tensors in
$Q_{R,T}(x_0,t_0)$.
\end{theorem}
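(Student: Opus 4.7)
The plan is to treat both (a) and (b) via a unified maximum-principle argument applied to a carefully normalized version of the Hessian matrix $u_{ij}/u$. Set $h = \log u$, $L = \log(A/u) \ge 0$, and $f = 1+L \ge 1$. The asserted inequality is the matrix bound $u_{ij} \le \Lambda\,u\,g_{ij}$ with $\Lambda = (18+Bt)f/t$ in case (a) and $\Lambda = C(T^{-1}+R^{-2}+B)f^2$ in case (b). Since this reduces to bounding the largest eigenvalue of $\phi_{ij} := u_{ij}/u$, we introduce the scalar test function
\[
F(x,t) \;=\; \bigl(\phi_{ij}\xi^i\xi^j - \Lambda\bigr)\,\psi(x,t),
\]
with $\psi = t$ for case (a) and $\psi$ a smooth space-time cutoff supported in $Q_{R,T}(x_0,t_0)$ for case (b). At the eventual maximum point $(x_*,t_*)$ we take $\xi$ to be the unit eigenvector of $\phi_{ij}$ realizing its largest eigenvalue, extended by parallel transport along radial geodesics from $x_*$; then $\nabla\xi(x_*)=0$ and $\partial_t\xi$ contributes only zeroth-order curvature terms. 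The aim is to show $F\le 0$.

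The heart of the matter is the evolution equation for $\phi_{ij}$ under the coupled heat/Ricci-flow system. Combining the Bochner-type commutator $[\Delta,\nabla_i\nabla_j]u$ with the Ricci-flow identity $\partial_t\Gamma^k_{ij}=-\nabla_iR^k{}_j-\nabla_jR^k{}_i+\nabla^kR_{ij}$, the contributions involving $\nabla R$ cancel exactly, yielding
\[
(\partial_t-\Delta)\phi_{ij} \;=\; -2\,L^k\nabla_k\phi_{ij} \;-\;2R_{ikjl}\,\phi^{kl} \;+\;R_i{}^k\phi_{kj}+R_j{}^k\phi_{ki}.
\]
This is the Ricci-flow cancellation promised in the introduction; it is the reason that $B$ in Theorem~\ref{Thm-LogHessRic} depends only on $L^\infty$-norms of curvature tensors rather than on $\|\nabla Ric\|_\infty$ as in Theorem~\ref{ThmHessfixed}. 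We also use the auxiliary identities $(\partial_t-\Delta)L=-|\nabla L|^2$ and $(\partial_t-\Delta)g_{ij}=-2R_{ij}$.

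Assume $F$ has a positive interior maximum at $(x_*,t_*)$. There $\nabla F=0$, $\Delta F\le 0$, and $\partial_tF\ge 0$, so $(\partial_t-\Delta)F\ge 0$. Writing $\widetilde\phi_{ij} := \phi_{ij} - \Lambda\,g_{ij}$ as the barrier tensor, the evolutions above combine with $(\partial_t-\Delta)\Lambda = -[\Lambda'(L)+\Lambda''(L)]|\nabla L|^2$ and $(\partial_t-\Delta)g_{ij}=-2R_{ij}$ to yield
\[
(\partial_t-\Delta)\widetilde\phi_{ij} \;=\; -2\,L^k\nabla_k\widetilde\phi_{ij} \;+\; \mathcal{C}(\widetilde\phi)_{ij} \;+\;\bigl(\Lambda''-\Lambda'\bigr)|\nabla L|^2\,g_{ij} \;+\;2\,\Lambda\,R_{ij},
\]
where $\mathcal{C}$ collects the curvature terms acting on $\widetilde\phi$ and, for $\Lambda\propto f^2$, one has $\Lambda''-\Lambda' = -2C_0 L\le 0$, giving a favorable negative contribution. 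Using $\nabla F=0$ to substitute $\nabla(\widetilde\phi_{ij}\xi^i\xi^j) = -\widetilde\phi_{ij}\xi^i\xi^j\,\nabla\log\psi$ at $(x_*,t_*)$, the standard cutoff bounds $|\nabla\psi|^2/\psi\le C/R^2$, $\Delta\psi\ge -C/R^2$, $|\partial_t\psi|\le C/T$, together with the Hamilton-type gradient estimate $|\nabla L|^2\le Cf^2(T^{-1}+R^{-2}+B)$, reduce $(\partial_t-\Delta)F\ge 0$ to a contradiction via Cauchy--Schwarz once the universal constant $C$ in $\Lambda$ is large enough.

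The main obstacle is twofold. First, the exponent of $f$ in $\Lambda$ must be calibrated so that the negative contribution $-2C_0 L|\nabla L|^2$ dominates the cross term $\nabla L\cdot\nabla\psi$ after Cauchy--Schwarz. In case (b) the presence of $\nabla\psi$ forces the power $f^2$, whereas in case (a) with $\psi=t$ the power $f$ suffices; the sharpness of $f^2$ in the local case is confirmed by the example mentioned after Theorem~\ref{ThmHessfixed}. Second, under Ricci flow the distance function $d(x_0,x,t)$ evolves in time and is only Lipschitz along the cut locus; the required bounds on $\psi$ and its derivatives therefore come from Perelman's smoothed upper-barrier distance together with the Laplace comparison theorem, both depending only on $\|Rm\|_\infty$, which is what keeps the constant $B$ free of any derivatives of curvature.
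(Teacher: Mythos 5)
Your proposal has a genuine gap at its core: the maximum-principle argument as you have set it up does not close, because the evolution inequality you derive contains no good (negative, quadratic) term in the quantity being maximized. Your $\phi_{ij}=u_{ij}/u$ satisfies a \emph{linear} equation
$(\partial_t-\Delta)\phi_{ij}=2\nabla\log u\cdot\nabla\phi_{ij}+\mathcal{C}(\phi)_{ij}$, and at the interior maximum of $F$ the drift term only produces, via $\nabla F=0$, a contribution \emph{linear} in $\widetilde\phi_{ij}\xi^i\xi^j$. The inequality you reach is of the form $0\le C_1\,(\widetilde\phi\xi\xi)+C_2$, which yields no upper bound. You try to extract the closing mechanism from $(\partial_t-\Delta)\Lambda$, but there is a sign error: since $(\partial_t-\Delta)\Lambda(L)=-(\Lambda'+\Lambda'')|\nabla L|^2$, the term entering the evolution of $\widetilde\phi_{ij}=\phi_{ij}-\Lambda g_{ij}$ is $+(\Lambda'+\Lambda'')|\nabla L|^2 g_{ij}$, not $(\Lambda''-\Lambda')|\nabla L|^2 g_{ij}$; for $\Lambda\propto f^2$ one has $\Lambda'+\Lambda''=2C_0(2+L)>0$, so this term is \emph{unfavorable}. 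Even with the sign corrected, a term of size $L|\nabla L|^2$ is not quadratic in the eigenvalue and cannot produce the self-improving inequality needed; bounding it by the gradient estimate gives $Cf^3(T^{-1}+R^{-2}+B)$, which exceeds the barrier $\Lambda\propto f^2$ in its $f$-dependence, and raising the power of $f$ in $\Lambda$ only reproduces the problem one power higher.

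The missing idea is the paper's choice of auxiliary quantities. One must (i) normalize by $u(1-f)$ with $f=\log(u/A)\le 0$, setting $V=(u_{ij}/(u(1-f)))$, and (ii) couple $V$ with the gradient tensor $W=(u_iu_j/(u^2(1-f)^2))$ in the combination $\alpha V+W$ with $\alpha=4$. The Bochner computation for $W$ produces the term $2(V+fW)^2$ in $L(\alpha V+W)$, and an eigenvector analysis at the maximum point shows $H\ge 2\lambda^2/\alpha^2$ for the largest nonnegative eigenvalue $\lambda$. This quadratic term is what turns the maximum-principle inequality into $\frac{2\lambda^2}{\alpha^2}\le\frac{\tau}{t^2}+|\xi^T(\alpha P+Q)\xi|+2\lambda|Ric(\xi,\xi)|$ and hence into an explicit bound on $\lambda$ after Cauchy's inequality. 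Several of your other ingredients do match the paper --- the Ricci-flow cancellation of the $\nabla Ric$ terms in the Hessian commutator, the use of the Li--Yau/BCP gradient bounds to control $-\Delta u/u$ and $|\nabla u|^2/(u^2(1-f)^2)$, the normalization $\mu/g(t)(\xi,\xi)$ to handle the time-dependence of $|\xi|_{g(t)}$, and the $f^2$ versus $f$ dichotomy between the local and global cases --- but without the $\alpha V+W$ structure and its quadratic term these ingredients cannot be assembled into a proof.
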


We point out that the constant $B$ in Theorem \ref{Thm-LogHessRic}
does not depend on the gradient of Ricci curvatures.

We now describe briefly the method to prove both results. As
expected, the general idea of the proof is still the Bernstein
technique of finding a quantity (auxiliary function) involving the
Hessian, which satisfies a nonlinear differential equation
amenable to the maximum principle. The work is to find such a
quantity.  In this paper, the central quantity is the quotient of
the Hessian of the solution with the density of Boltzmannn entropy
of the solution,  i.e., $\frac{ u_{ij}}{ u \ln u}$, where $u$ is a
positive solution of the heat equation. Another quantity is
$\frac{|\nabla u|}{ u \ln u}$. A Bernstein type argument is
carried out for a combination of these two quantities and suitable
cutoff functions as in \cite{Li-Yau1986}.

The rest of the paper is organized as follows. In Section
\ref{sec-FixedMetric},
we  prove Theorem \ref{ThmHessfixed} about the Hessian
bound on a Riemannian manifold with a fixed metric. In Section
\ref{sec-RicciFlow}, we
prove Theorem \ref{Thm-LogHessRic}  which treats the heat equation
coupled with the Ricci flow.

{\it Acknowledgment.} We wish to thank Professor Lei Ni for helpful
suggestions.

\section{Heat Equations under the Fixed Metric}\label{sec-FixedMetric}

Let $M$ be a Riemannian manifold with metric $g$
and $\Delta$ be the Laplace-Beltrami operator.
We consider a positive solution $u$ of the heat equation
$$
u_{t} = \Delta u\quad\text{in } M \times (0, \infty).$$
We
assume
$$0<u\le A.$$ Set
$$ f= \log \frac uA.$$ Let $\{x^1, ..., x^n\}$ be a local orthonormal frame
at a point, say $p \in M$.
Then
$$ f_{i} = \frac{u_{i}}{u }, \quad f_{ij} = \frac{u_{ij}} {u} -\frac{u_{i}u_{j}}{u^{2}},$$
and hence
$$f_{t} = \Delta f+ |\nabla f|^{2}.$$

We first derive two  equations on which the theorems are based.

\begin{lemma}
\label{Lemma-v(ij)} Set
$$ v_{ij} = \frac{u_{ij}}{ u(1-f)}.$$ Then,
$$
\begin{aligned}
 &\left(- \partial_{t}+\Delta-\frac {2f}{1-f} \nabla
f\cdot\nabla\right) v_{ij} = \frac{|\nabla f|^{2}}{1-f} v_{ij} \\
&\qquad+ \frac{1}{u (1-f)} [- 2
 R_{kijl} u_{kl} + R_{il} u_{jl} + R_{jl} u_{il} + ( \nabla_i R_{jl} +
\nabla_j R_{il} - \nabla_l R_{ij}) u_l ].
\end{aligned}
$$
\end{lemma}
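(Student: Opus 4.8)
The plan is to carry this out in two stages. First I would record the Bochner--Weitzenb\"ock evolution equation for the Hessian $u_{ij}$ of a heat solution, and then compute how $\partial_t-\Delta$ acts on the quotient $v_{ij}=u_{ij}/w$ with the weight $w:=u(1-f)$, keeping careful track of the terms produced by differentiating the weight.

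\emph{Stage 1: curvature terms.} Applying $\nabla_i\nabla_j$ to $u_t=\Delta u$ and commuting $\nabla_i\nabla_j$ past $\Delta=g^{kl}\nabla_k\nabla_l$ via the Ricci identities, one obtains
\[
(\Delta-\partial_t)u_{ij}=-2R_{kijl}u_{kl}+R_{il}u_{jl}+R_{jl}u_{il}+(\nabla_iR_{jl}+\nabla_jR_{il}-\nabla_lR_{ij})u_l .
\]
The full curvature tensor appears when commuting the second-order operator $\nabla_i\nabla_j$ with $\Delta$, while the gradient-of-Ricci terms arise from commuting $\nabla_i\nabla_j$ with a single first derivative inside $\Delta$; the second Bianchi identity is then used to put them into the displayed symmetric form. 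This is the only step in which curvature enters the lemma.

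\emph{Stage 2: the weight.} With $v_{ij}=u_{ij}/w$, I would use
\[
\Delta\!\left(\frac{a}{w}\right)=\frac{\Delta a}{w}-\frac{2\,\nabla a\cdot\nabla w}{w^2}-\frac{a\,\Delta w}{w^2}+\frac{2a\,|\nabla w|^2}{w^3}
\]
and the analogous first-order identity for $\partial_t$, together with $\nabla u_{ij}=w\,\nabla v_{ij}+v_{ij}\,\nabla w$. The two contributions proportional to $|\nabla w|^2/w^2$ then cancel, leaving
\[
(\Delta-\partial_t)v_{ij}=\frac{(\Delta-\partial_t)u_{ij}}{w}-\frac{2\,\nabla w\cdot\nabla v_{ij}}{w}-\frac{v_{ij}}{w}\,(\Delta-\partial_t)w .
\]
It then remains only to evaluate the weight. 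Since $\nabla u=u\nabla f$, one has $\nabla w=(1-f)\nabla u-u\nabla f=-f\,u\,\nabla f$, so $\nabla w/w=-\frac{f}{1-f}\nabla f$; moving the middle term to the left converts $\Delta-\partial_t$ into $-\partial_t+\Delta-\frac{2f}{1-f}\nabla f\cdot\nabla$. Likewise, using $f_t=\Delta f+|\nabla f|^2$ and $\nabla u\cdot\nabla f=u|\nabla f|^2$, one computes $(\Delta-\partial_t)w=-2\,\nabla u\cdot\nabla f+u|\nabla f|^2=-u|\nabla f|^2$, so that $-\frac{v_{ij}}{w}(\Delta-\partial_t)w=\frac{|\nabla f|^2}{1-f}\,v_{ij}$. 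Substituting this and the Stage 1 identity (divided by $w=u(1-f)$) gives the asserted equation.

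I expect the only real obstacle to be the bookkeeping in Stage 1: commuting the Hessian past $\Delta$ and collecting every curvature and gradient-of-Ricci term with the correct sign under the stated convention $R_{ij}=g^{kl}R_{iklj}$. Everything after that is a self-canceling computation; the two features that make it go through are that $\nabla u=u\nabla f$ renders $\nabla w$ parallel to $\nabla f$, and that the $|\nabla w|^2$ contributions drop out entirely.
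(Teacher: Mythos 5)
Your proof is correct and follows essentially the same route as the paper: the same Hessian commutation (Bochner--Weitzenb\"ock) formula for $(\Delta-\partial_t)u_{ij}$, the same drift term generated by $\nabla w/w=-\tfrac{f}{1-f}\nabla f$, and the same weight evolution $(\Delta-\partial_t)\bigl(u(1-f)\bigr)=-u|\nabla f|^2$. The only difference is cosmetic: the paper expands $\partial_t v_{ij}$, $\partial_k v_{ij}$, $\Delta v_{ij}$ term by term and recombines, while you package the same cancellations into a general quotient identity for $\Delta(a/w)$ together with $\nabla u_{ij}=w\,\nabla v_{ij}+v_{ij}\nabla w$, which makes the cancellation of the $|\nabla w|^2$ contributions and the emergence of the drift operator $L$ a bit more transparent.
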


\begin{proof} By noting
$$\partial_t\big(u(1-f)\big)=-u_tf,\quad \partial_k\big(u(1-f)\big)=-u_kf,$$
we have
\begin{align*}
\partial_{t} v_{ij} &=  \frac{u_{ijt}}{u (1-f)} + \frac{u_{ij} f u_{t}}{u^2 (1-f)^{2}},\\
\partial_{k} v_{ij} &=  \frac{u_{ijk}}{u (1-f)} + \frac{u_{ij} f u_{k}}{u^2 (1-f)^{2}}.\end{align*}
Recall the commutation formula (see \cite{CH:1} p219 e.g.): if
$\Delta u - \partial_t u = 0$, then the Hessian $u_{ij}$ satisfies
$$
-\partial_t u_{ij} + \Delta u_{ij} = - 2
 R_{kijl} u_{kl} + R_{il} u_{jl} + R_{jl} u_{il} + ( \nabla_i R_{jl} +
\nabla_j R_{il} - \nabla_l R_{ij}) u_l.$$
Note
$$\Delta v_{ij} =
 \frac{\Delta u_{ij}}{u (1-f)} +
 \frac{u_{ij} f \Delta u}{u^2 (1-f)^{2}}+\frac{2u_{ijk}u_kf}{u^2 (1-f)^2} + \frac{u_{ij} f_k u_{k}}{u^2 (1-f)^{2}}+ \frac{2u_{ij} f^2 u_{k}^2}{u^3 (1-f)^{3}}.
$$
With $u_k=uf_k$ and the commutation formula,
 we then have
\begin{align*} (\Delta -\partial_{t}) v_{ij}
&= \frac{2 f u_{ijk} f_{k}}{u (1-f)^{2}} +
\frac {u_{ij} |\nabla f|^{2}}{u (1-f)^{2}} +
\frac{2 u_{ij}}{u (1-f)^{3}} f^{2} |\nabla f|^{2} \\
&\qquad + \frac{1}{u (1-f)} [- 2
 R_{kijl} u_{kl} + R_{il} u_{jl} + R_{jl} u_{il} + ( \nabla_i R_{jl} +
\nabla_j R_{il} - \nabla_l R_{ij}) u_l ]\\
&=\frac{2 f f_{k}}{1-f} \left[\frac {u_{ijk}}{u (1-f)}+ \frac{u_{ij}
ff_{k}}{u (1-f)^{2}}\right] + \frac{u_{ij}}{u (1-f)} \frac{|\nabla
f|^{2}}{(1-f)}\\
&\qquad +\frac{1}{u (1-f)} [- 2
 R_{kijl} u_{kl} + R_{il} u_{jl} + R_{jl} u_{il} + ( \nabla_i R_{jl} +
\nabla_j R_{il} - \nabla_l R_{ij}) u_l ].
\end{align*}
With the help of the expression for $\partial_kv_{ij}$,
 we obtain
\begin{align*} (\Delta- \partial_{t}) v_{ij} &= \frac{2f}{1-f} f_{k}
\partial_{k} v_{ij} + v_{ij} \frac{|\nabla f|^{2}}{(1-f)}\\
&\qquad +\frac{1}{u (1-f)} [- 2
 R_{kijl} u_{kl} + R_{il} u_{jl} + R_{jl} u_{il} + ( \nabla_i R_{jl} +
\nabla_j R_{il} - \nabla_l R_{ij}) u_l ].
\end{align*} This is the
desired result.
\end{proof}

\begin{lemma}\label{Lemma-w(ij)} Set
$$ w_{ij} = \frac{u_{i} u_{j}}{u^2(1-f)^{2}}.$$ Then,
\[
\al
&\left(- \partial_{t}+\Delta-\frac {2f}{1-f} \nabla f\cdot\nabla \right) w_{ij} \\
&\qquad =  \frac{2 |\nabla f|^{2}}{1-f} w_{ij}
+  2(v_{ki} + f w_{ki})(v_{kj} +f w_{kj}) +  R_{ik} w_{kj} + R_{jk} w_{ki}.
\eal
\]
\end{lemma}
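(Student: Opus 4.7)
\medskip

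\noindent\textbf{Proof plan.} The computation runs parallel to the one for $v_{ij}$ in Lemma~\ref{Lemma-v(ij)}, but now everything can be expressed entirely through the log-function $f=\log(u/A)$, since
\[
w_{ij}=\frac{f_if_j}{(1-f)^2},
\]
using $u_i=uf_i$. Set $W=1-f$ for brevity. I would first record the two ingredients that drive everything: the heat equation for $f$, namely $f_t=\Delta f+|\nabla f|^2$, and its $i$-th covariant derivative, which by the commutation identity $\nabla_i\Delta f=\Delta f_i-R_{ik}f_k$ reads
\[
f_{ti}=\Delta f_i-R_{ik}f_k+2f_kf_{ki}.
\]
These bring in both the Ricci terms one expects on the right-hand side and quadratic Hessian terms that must eventually reorganize into the $(v_{ki}+fw_{ki})(v_{kj}+fw_{kj})$ combination.

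Next I would compute $\nabla_k w_{ij}$, $\Delta w_{ij}$, and $\partial_t w_{ij}$ by direct differentiation, treating $w_{ij}$ as a rational expression in $f_a$, $f_{ab}$, $W$ and using $\nabla_k W=-f_k$, $\partial_t W=-f_t$. The result is an expression with denominators $W^2$, $W^3$, $W^4$:
\[
\begin{aligned}
\Delta w_{ij}-\partial_t w_{ij}&=\frac{2f_{ki}f_{kj}+R_{ik}f_kf_j+R_{jk}f_kf_i-2f_k(f_{ki}f_j+f_if_{kj})}{W^2}\\
&\quad+\frac{(4-2f)f_k(f_{ki}f_j+f_if_{kj})-2|\nabla f|^2f_if_j}{W^3}+\frac{(6-4f)|\nabla f|^2f_if_j}{W^4},
\end{aligned}
\]
after cancelling the $\Delta f$ terms coming from $\partial_t w_{ij}$ against those produced by $\Delta w_{ij}$. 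The drift term $-\frac{2f}{W}f_k\nabla_k w_{ij}$ contributes only to the $W^3$ and $W^4$ denominators and is engineered, exactly as in Lemma~\ref{Lemma-v(ij)}, to collapse the mixed factors $4-2f$, $6-4f$ into plain powers of $W$.

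The final and main bookkeeping step is to recognize the identity
\[
v_{ki}+fw_{ki}=\frac{f_{ki}}{W}+\frac{f_kf_i}{W^2},
\]
which follows from $v_{ki}=(f_{ki}+f_kf_i)/W$ together with $W+f=1$. Expanding
\[
2(v_{ki}+fw_{ki})(v_{kj}+fw_{kj})=\frac{2f_{ki}f_{kj}}{W^2}+\frac{2f_k(f_if_{kj}+f_jf_{ki})}{W^3}+\frac{2|\nabla f|^2f_if_j}{W^4}
\]
produces precisely the combination of $W^2$, $W^3$, $W^4$ terms one is left with, while the remaining $2|\nabla f|^2 f_if_j/W^3$ is recognized as $(2|\nabla f|^2/W)\,w_{ij}$, and the $R_{ik}f_kf_j+R_{jk}f_kf_i$ in the $W^2$ denominator becomes $R_{ik}w_{kj}+R_{jk}w_{ki}$. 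The main obstacle is purely organizational: several cross terms at different powers of $W$ conspire to cancel (for instance, using $4-2f=2W+2$ and $6-4f=4W+2$ to transfer mass between denominators), so the challenge is to arrange the bookkeeping cleanly rather than to discover any new identity. No further curvature commutation beyond the Bochner identity used for $f_{ti}$ is needed, since $w_{ij}$ is built from a gradient rather than a Hessian.
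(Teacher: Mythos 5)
Your computation is correct and is essentially the paper's proof rewritten in the variable $f$: the paper works with $u$, $u_i$, $u_{ij}$ and regroups the coefficients via $4=2+2$, $6=4+2$, which after the substitution $u_i=uf_i$ is the same redistribution as your $4-2f=2W+2$, $6-4f=4W+2$, and your identity $v_{ki}+fw_{ki}=f_{ki}/W+f_kf_i/W^2$ is exactly the paper's factorization of the quadratic term. One bookkeeping remark: as written, your displayed formula is not $\Delta w_{ij}-\partial_t w_{ij}$ but the full $Lw_{ij}$ with the drift already absorbed --- a direct computation gives plain coefficients $4$ and $6$ for $\Delta-\partial_t$ alone, and the corrections $-2f$ at the $W^{-3}$ level and $-4f$ at the $W^{-4}$ level are precisely what $-\frac{2f}{W}f_k\nabla_k w_{ij}$ contributes; provided the drift is counted exactly once, your expansion matches the stated right-hand side term by term.
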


\begin{proof} We proceed similarly as in the proof of
Lemma \ref{Lemma-v(ij)}. First,
\begin{align*}
\partial_{t} w_{ij} &=  \frac {u_{ti} u_{j} + u_{i} u_{tj}}{u^2(1-f)^{2}}
 + \frac {2 u_{t} u_{i}u_{j}f}{u^3(1-f)^{3}},\\
\partial_{k} w_{ij} &=  \frac{u_{ki} u_{j}+ u_{i} u_{jk}}{u^2(1-f)^{2}}
+ \frac{2 u_{k} u_{i} u_{j}f}{u^3(1-f)^{3}}.
\end{align*} Using
Bochner's formula, we arrive at, after differentiation,
\begin{align*}\Delta w_{ij} &=
\frac{(\Delta u)_{i} u_{j} + 2 u_{ki} u_{kj} + u_{i} (\Delta
u)_{j}} {u^2(1-f)^{2}} +  R_{ik} \frac{ u_k u_j}{u^2(1-f)^{2}} +
  R_{jk} \frac{ u_k u_i}{u^2(1-f)^{2}}\\
&\qquad+ \frac{4 (u_{ki} u_{j} +  u_{kj} u_{i})u_kf}{u^3(1-f)^{3}} +
\frac{2 u_{i} u_{j}\Delta u f}{u^3(1-f)^{3}}+ \frac{2
u_iu_ju_{k} f_k}{u^3(1-f)^{3}} + \frac{6
u_iu_ju_k^2f^2}{u^4(1-f)^{4}}.
\end{align*} Hence
$$
(\Delta -
\partial_{t}) w_{ij} = H  + R_{ik} w_{kj} + R_{jk} w_{ki},
$$ where
$$
H= \frac{2 u_{ki} u_{kj}} {u^2(1-f)^{2}} +
\frac{4 (u_{ki} u_{j} +  u_{kj} u_{i})u_kf}{u^3(1-f)^{3}}+  \frac{2
u_iu_ju_{k} f_k}{u^3(1-f)^{3}} + \frac{6
u_iu_ju_k^2f^2}{u^4(1-f)^{4}}.
$$ In the expression of $H$, we write
$4=2+2$, $6=4+2$ and hence
\begin{align*}
H&= \frac{2 (u_{ki} u_{j} + u_{kj} u_{i})u_kf}{u^3(1-f)^{3}}+\frac{4
u_iu_ju_k^2f^2}{u^4(1-f)^{4}}+\frac{2 u_iu_ju_{k} f_k}{u^3(1-f)^{3}} \\
&\qquad+\frac{2 u_{ki} u_{kj}} {u^2(1-f)^{2}} + \frac{2 (u_{ki} u_{j} +
u_{kj} u_{i})u_kf}{u^3(1-f)^{3}}+   \frac{2
u_iu_ju_k^2f^2}{u^4(1-f)^{4}}\\
&= \frac{2u_kf}{u(1-f)}\left(\frac{u_{ki} u_{j}+ u_{i}
u_{jk}}{u^2(1-f)^{2}} + \frac{2 u_{k} u_{i}
u_{j}f}{u^3(1-f)^{3}}\right)+\frac{2u_kf_k}{u(1-f)}
\cdot\frac{u_iu_j}{u^2(1-f)^{2}}\\
&\qquad+2\left(\frac{u_{ki}}{u(1-f)}+\frac{u_iu_kf}{u^2(1-f)^2}\right)
\left(\frac{u_{kj}}{u(1-f)}+\frac{u_ju_kf}{u^2(1-f)^2}\right).\end{align*}
With $u_k=uf_k$, the expression of $\partial_kw_{ij}$ and
definitions of $v_{ij}$ and $w_{ij}$, we have
$$H=\frac{2u_kf}{u(1-f)}\partial_kw_{ij}+\frac{2 |\nabla f|^{2}}{1-f} w_{ij} +
2(v_{ki} + f w_{ki}) (v_{kj} +f w_{kj}).$$ This implies the desired
result.
\end{proof}

\begin{remark}
We define the trace $w$ of $(w_{ij})$ by
$$w= tr(w_{ij}) = \frac{|\nabla u|^{2}}{u^2(1-f)^{2}}.$$ We also have
\begin{align*}
&\left(- \partial_{t}+\Delta-\frac {2f}{1-f} \nabla f\cdot\nabla\right)
v_{ij}=  (1-f) w v_{ij}\\
&\qquad\qquad+ \frac{1}{u (1-f)} [- 2
 R_{kijl} u_{kl} + R_{il} u_{jl} + R_{jl} u_{il} + ( \nabla_i R_{jl} +
\nabla_j R_{il} - \nabla_l R_{ij}) u_l ],\\
&\left(- \partial_{t}+\Delta-\frac {2f}{1-f} \nabla f\cdot\nabla\right)
w_{ij} = 2(1-f) w w_{ij}\\
&\qquad\qquad +  2(v_{ki} + f w_{ki})(v_{kj} +f w_{kj})+  R_{ik}
w_{kj} + R_{jk} w_{ki}.
\end{align*}
\end{remark}


We now prove Theorem \ref{ThmHessfixed}.

\proof[Proof of Theorem \ref{ThmHessfixed}]
{\bf Part (a).} We first perform some important calculations.
Let $p$ be a point on the manifold and $\{ x^1, ..., x^n \}$ be a local orthonormal coordinates system.
In this coordinate and using the same notations as in
Lemma \ref{Lemma-v(ij)} and Lemma \ref{Lemma-w(ij)},
the $(2, 0)$-tensor fields $v_{ij}$ and $w_{ij}$
can be regarded as $n\times n$ matrices.
Set $V = ( v_{ij})$, $ W= (w_{ij})$, $w=tr (W)$, and
$$ L= -\partial_{t} + \Delta - \frac{2f}{1-f} \nabla{f} \cdot \nabla.$$
Then, by Lemma \ref{Lemma-v(ij)} and Lemma \ref{Lemma-w(ij)},
\begin{align}
\label{eqV}
LV&= (1-f) w V + P, \\
\label{eqW}
LW&= 2(1-f)w W +2(V+fW)^{2} +Q,
\end{align}
where $P$ and $Q$ are matrices whose $(i, j)$-th components are
\begin{align}\label{eqP}\begin{split}
P_{ij}&= \frac{1}{u (1-f)} [- 2
 R_{kijl} u_{kl} + R_{il} u_{jl} + R_{jl} u_{il} + ( \nabla_i R_{jl} +
\nabla_j R_{il} - \nabla_l R_{ij}) u_l ]\\
&=- 2
 R_{kijl} v_{kl} + R_{il} v_{jl} + R_{jl} v_{il} + ( \nabla_i R_{jl} +
\nabla_j R_{il} - \nabla_l R_{ij})  \frac{u_l}{u (1-f)},
\end{split}\end{align} and
\begin{equation}
\label{eqQ}Q_{ij}= R_{ik} w_{kj} + R_{jk} w_{ki}.\end{equation}

For a constant $\alpha \in \mathbb{R}$ to be determined, we have
$$ L(\alpha V + W)= \alpha (1-f) w V + 2(1-f) w W + 2(V+fW)^{2}
+\alpha P + Q.$$
Let $\xi \in T_p M$ be a unit tangent vector at the point $p$.
We use parallel translation along
geodesics emanating from $p$ to extend $\xi$ to a smooth
vector field in the local coordinate
neighborhood. We still denote the vector field by $\xi$.  Since $V$ and $W$ are
$(2, 0)$-tensor fields,  the function
$$
\lambda = \xi^{T} (\alpha V + W) \xi \equiv (\alpha V + W) (\xi, \xi)
$$
is a well-defined smooth function in a neighborhood of $p$.
 Then
$$ L\lambda = H +  \xi^{T} (\alpha P + Q) \xi, $$
where
\be
\label{eqH}
H= \alpha (1-f) w \xi^{T} V \xi +2 (1-f) w \xi^{T} W \xi + 2 |(V+fW) \xi|^{2}.
\ee
By $ \alpha \xi^{T} V \xi = \lambda - \xi^{T} W \xi$,
we have
\begin{align*}
H&=  (1-f) w (\lambda - \xi^{T} W \xi) + 2(1-f) w \xi^{T} W \xi + 2 |(V +f W) \xi|^{2} \\
&=  (1-f) w \lambda + (1-f) w \xi^{T} W \xi + 2 |(V+ fW)\xi|^{2}.
\end{align*}
To simplify the last term further, we fix the point $p$ and assume $\xi$ is the vector field generated,
via parallel translation through geodesics emanating from $p$,
by an eigenvector of $\alpha V + W$ at $p$, i.e., at $p$,
$$ (\alpha V + W) \xi =\lambda \xi.$$
Then
$$ (V + fW) \xi = \frac{\lambda}{\alpha} \xi -\frac{1}{\alpha} W \xi + f W \xi= \frac{\lambda}{\alpha} \xi -\left(\frac{1}{\alpha}- f\right) W \xi, $$
and hence
$$ |(V+ fW) \xi|^{2} = \frac{\lambda^{2}}{\alpha^{2}} - \frac{2 \lambda}{\alpha} \left( \frac{1}{\alpha} - f\right) \xi^{T} W \xi + \left( \frac{1}{\alpha} -f\right)^{2} |W \xi|^{2}.$$
Hence
\begin{align*}H&= \frac{2 \lambda^{2}}{\alpha^{2}} + \lambda \left( w- \frac{4}{\alpha^{2}} \xi^{T} W \xi \right)
- f \lambda \left( w- \frac{4}{\alpha} \xi^{T} W \xi\right)\\
&\qquad+ (1-f) w \xi^{T} W \xi + 2 \left( \frac{1}{\alpha}- f\right)^{2} |W \xi|^{2}.\end{align*}
The last two terms are independent of $\lambda$ and nonnegative.
Hence
\be
\label{H>}
 H \geq \frac{2 \lambda^{2}}{\alpha^{2}} + \lambda \left( w- \frac{4}{\alpha^{2}} \xi^{T} W \xi\right) - f \lambda \left( w- \frac{4}{\alpha} \xi ^{T} W \xi\right).
\ee For the last term, we note $W$ is a rank-one matrix and hence
$$ \xi^{T} W \xi \leq w.$$
With $f <0$ and choosing $\alpha  \ge 4$,  the third term in nonnegative, if $\lambda \geq 0$.
If $\lambda \geq 0$, the second term is also nonnegative. Hence
\be
\label{H>2lamb2} H \geq \frac{2 \lambda^{2}}{\alpha^{2}}.
\ee
In summary, if $ \lambda \geq 0$, then at the point $p$,
\begin{equation}
\label{Llamb}
 L \lambda \geq \frac{2 \lambda^{2}}{\alpha^{2}} +  \xi^{T} (\alpha P + Q) \xi.
 \end{equation}

Now we proceed to prove Part (a).
Let $\tau$ be a universal constant to be fixed later. With
$\alpha=4$, suppose the 2 form
$$\alpha V + W-\frac{\tau}{t} g$$ assumes its largest nonnegative eigenvalue
at  a space-time point $ (p_1, t_1)$, with $t_1 >0$.
Let $\xi$ be a unit eigenvector at $p_1$. We use parallel translation along
geodesics emanating from $p_1$ to extend $\xi$ to a smooth vector field which is still denoted by $\xi$. Set, in a local coordinate
\be
\label{defmu} \mu = \xi^{T} (\alpha V + W-\frac{\tau}{t} g) \xi.
\ee
and
\be
\label{deflamb} \lambda = \xi^{T} (\alpha V + W) \xi.
\ee
Then, both $\mu$ and $\lambda$ are smooth functions in a space time neighborhood of
$(x_1, t_1)$.  Also
$$L \mu = L \left(\lambda-\frac{\tau}{t}\right) = L \lambda- \frac{\tau}{t^2} = H -\frac{\tau}{t^2} + \xi^{T} (\alpha P + Q) \xi.
$$ Here $H$ is given by (\ref{eqH}).
We now evaluate at $(p_1, t_1)$.
Since $\lambda -\tau/t$ has its nonnegative maximum at $(p_1, t_1)$, we have,  by (\ref{Llamb}),
$$ 0 \geq  L\left(\lambda-\frac{\tau}{t}\right)
\geq \frac{2\lambda^{2}}{\alpha^{2}}-\frac{\tau}{t^2}-|\xi^{T}
(\alpha P + Q) \xi | \quad\text{at }(p_1,t_1), $$ or \be
\label{2lamb2/a2<}
\frac{2\lambda^{2}}{\alpha^{2}}\le\frac{\tau}{t^2}+|\xi^{T}
(\alpha P + Q) \xi | \quad\text{at }(p_1,t_1). \ee In order to
bound $\mu$ and $\lambda$ from above, we need to find an upper bound for $
|\xi^{T} (\alpha P + Q) \xi | $ at $(p_1, t_1)$.

Let $\xi=(\xi_1,
..., \xi_n)$.  By (\ref{eqP}) and (\ref{eqQ}), we obtain
\begin{align*}
&| \xi^{T}   (\alpha P + Q) \xi |\le | \alpha\xi^{T} P \xi |
+| \xi^{T} Q \xi |\\
&\quad\le \left| \xi_i \xi_j \alpha \left[- 2
R_{kijl} v_{kl} + R_{il} v_{jl} + R_{jl} v_{il} + ( \nabla_i R_{jl} +
\nabla_j R_{il} - \nabla_l R_{ij})  \frac{u_l}{u (1-f)} \right]\right|\\
&\quad\qquad+ \left|\xi_i \xi_j (R_{ik} w_{kj} + R_{jk} w_{ki})  \right| \\
&\quad\le  \left| \xi_i \xi_j \alpha \left[- 2
 R_{kijl} v_{kl} + R_{il} v_{jl} + R_{jl} v_{il}  \right] \right|  + C |\nabla Ric | \sqrt{|W|} + C |Ric| |W|.
\end{align*}
Writing $\alpha v_{kl} = \alpha v_{kl} + w_{kl} - w_{kl}$ etc  in the last line, we deduce
\be
\label{|aP+Q|}
\al
&| \xi^{T}   (\alpha P + Q) \xi |\\
&\quad\le  \left| \xi_i \xi_j  \left[- 2
R_{kijl} (\alpha v_{kl} + w_{kl}) + R_{il} (\alpha v_{jl}  + w_{jl})+
R_{jl} (\alpha v_{il}+ w_{il}) \right] \right|  \\
&\qquad\quad+ \left| \xi_i \xi_j  \left[- 2
R_{kijl}  w_{kl} + R_{il}  w_{jl}+ R_{jl}  w_{il} \right] \right|
+ C |\nabla Ric | \sqrt{|W|} + C |Ric| |W|.
\eal
\ee
At the point $p_1$, we can choose a coordinate system so that the matrix $\alpha V + W$ is diagonal.
Let $\mu_1, \cdots, \mu_n$ be the eigenvalues
of the matrix $\alpha V+W-\frac{\tau}{t} I$, listed in the increasing order.
Without loss of generality, we assume $\mu_1<0$ and $\mu_n>0$.
Then
$$
\al
&| R_{kijl} (\alpha v_{kl} + w_{kl}) | \le | R_{kijl} (\alpha v_{kl} + w_{kl} - \frac{\tau}{t}
\delta_{kl}) | + | R_{kijl}
\delta_{kl}|  \frac{\tau}{t}\\
&\le | \sum^{n}_{k=1}  R_{kijk} (\alpha v_{kk} + w_{kk}- \frac{\tau}{t})|
+ C |Rm|   \frac{\tau}{t} \le
C |Rm| (\mu_n + |\mu_1|) + C |Rm|   \frac{\tau}{t}.
\eal
$$
Similarly
$$
|R_{il} (\alpha_{il} v_{jl}  + w_{jl})| \le C |Ric| (\mu_n+ |\mu_1|) + C |Ric|   \frac{\tau}{t}.
$$
Combining the last few inequalities, we deduce
$$
| \xi^{T}   (\alpha P + Q) \xi | \le C |Rm| (\mu_n +|\mu_1|)+ C   |\nabla Ric | \sqrt{|W|} + C  |Rm| |W| + C |Rm|   \frac{\tau}{t}.
$$
In the following, we set $K_1=|Rm|_{L^\infty}$ and $K_2=|\nabla Ric|_{L^\infty}$. Then
\begin{align*}| \xi^{T}   (\alpha P + Q) \xi | &\le C K_1 (\mu_n +|\mu_1|
+ \frac{\tau}{t})+ C K_2 \sqrt{|W|} + C K_1 |W|\\
&\le C K_1 (\mu_n +|\mu_1| +  \frac{\tau}{t} )+ C K_2  + C (K_1+K_2) |W|.
\end{align*}
Observe that
\begin{align*} \mu_1 + (n-1) \mu_n &\ge \mu_1+\cdots+\mu_n \\
&= \operatorname{tr} \left(\frac{\alpha
u_{ij}}{u(1-f)}+\frac{u_iu_j}{u^2(1-f)^2} - \frac{\tau}{t} \delta_{ij} \right) \ge \frac{\alpha
\Delta u}{u(1-f)}  - n \frac{\tau}{t}.
\end{align*}
 Hence,
$$|\mu_1| \le (n-1) \mu_n -\frac{\alpha \Delta u}{u(1-f)} + n \frac{\tau}{t}.$$
Therefore
$$| \xi^{T}   (\alpha P + Q) \xi |\le C K_1 \left(\mu_n -\frac{\alpha \Delta u}{u(1-f)}
+ \frac{\tau}{t} \right)  + C K_2  + C (K_1+K_2) |W|.
$$

By \cite{Li-Yau1986}, we have
$$\frac{|\nabla u|^2}{u^2} - 2 \frac{ u_t }{u} \le \frac {c_1}t+K_0,$$
where $Ric\ge -K_0$ for some $K_0\ge 0$.
With $u_t =\Delta u$, we get
$$-\frac{\Delta u }{u} \le \frac {c_1}t+K_0.$$
Since $0\le u<A$, we have
$$\frac{1}{1-f}\le  1.$$
Therefore, at $(p_1, t_1)$, it holds
$$
| \xi^{T}   (\alpha P + Q) \xi | \le C K_1\left (\mu_n +\frac{1+\tau}{t}\right)+C(K_1K_0+K_2)
+ C (K_1+K_2) |W|.
$$
By the definition of $W=(w_{ij})$, we have
$$|W|\le \frac{|\nabla u|^2}{u^2(1-f)^2}.$$
By Theorem 1.1 in \cite{Hamilton1993}, we obtain
$$|W|\le C\left(\frac1t+K_0\right),$$
and hence
$$
| \xi^{T}   (\alpha P + Q) \xi | \le C (K_1+K_2)\left (\mu_n +\frac{1+\tau}{t}\right)+C(K_1K_0+K_2+K_2K_0).
$$
Since $\mu=\mu_n<\lambda$ at $(p_1, t_1)$, this shows, by (\ref{2lamb2/a2<}), that
$$  \frac{2\lambda^{2}}{\alpha^{2}}\le\frac{\tau}{t^2}
+ C (K_1+K_2)\left (\lambda +\frac{1+\tau}{t}\right)+C(K_1K_0+K_2+K_2K_0)
\quad\text{at }(p_1,t_1). $$
A simple application of the Cauchy inequality yields
$$\frac{\lambda}{\alpha}\le \frac{\sqrt{\tau+1}}t+ B\quad\text{at }(p_1,t_1), $$
where $B$ is a nonnegative constant depending only on $K_0, K_1$ and $K_2$
with the property that $B=0$ if $K_0=K_1=K_2=0$.
Then
$$\lambda-\frac\tau t\leq (\alpha\sqrt{\tau+1}-\tau)\frac1t+\alpha B\le \alpha B\quad\text{at }(p_1,t_1),$$
by choosing $\tau$ sufficiently large. In fact, for $\alpha=4$, we can
take $\tau=8+4\sqrt5$.

By definition,  $\mu=\lambda-\frac\tau
t$ at $(p_1, t_1)$ is the largest eigenvalue of the $2$ form
$\alpha V + W - \frac{\tau}{t} g$ on $M \times (0, T]$.
Therefore, given any  unit tangent vector $\eta \in T_x M$, $x
\in M$, it holds
\[
\eta^{T} (\alpha V + W) \eta - \frac{\tau}{t} g(\eta, \eta) \le (\lambda-\frac\tau
t) |_{(p_1, t_1)} \le
\alpha B\quad \text{in} \ M\times (0, T).
\] Thus
$$t  \eta^{T} V \eta  \leq \frac\tau\alpha+Bt.$$
This proves part (a) of the theorem.

\medskip

{\bf Part (b).}
Now we localize the result in part (a). It is unexpected  that the local bound is
different from the global one.  We point it out in the remark below that the local
form is also sharp in general.

Let $\psi$ be a cutoff function which
will be specified later. Then, for any smooth function $\eta$, we
have
\begin{align*}
\partial_{t} (\psi \eta) &=  \partial_{t} \psi \ \eta + \psi \partial_{t} \eta, \\
\nabla (\psi \eta) &=  \nabla \psi \ \eta + \psi  \nabla \eta, \\
\Delta (\psi \eta) &=  \Delta \psi \ \eta + 2 \nabla \psi \nabla \eta + \psi  \Delta \eta.
\end{align*}
Hence
\begin{align*}
\psi L \eta &=  - \psi \ \partial_{t} \eta + \psi  \Delta \eta - \psi \frac{2 f}{ 1-f} \nabla f\cdot \nabla \eta \\
&=  - (\partial_{t} (\psi \eta) - \eta  \partial_{t} \psi) + \Delta ( \psi \eta) - \eta\Delta \psi
- 2 \nabla \psi\cdot \nabla \eta - \frac{2 f}{1-f} \nabla f (\nabla (\psi \eta) - \eta\nabla \psi) \\
&=  - \partial_{t} (\psi \eta) + \Delta (\psi \eta) - \frac{2 f}{1-f}\nabla f \cdot\nabla (\psi \eta)
+ \eta  \partial_{t} \psi - \eta \Delta \psi \\
&\quad+\frac{2 f}{1-f}\eta \nabla f\cdot \nabla \psi  -2 \nabla \psi \cdot\nabla \eta.
\end{align*}
For the last term, we write
\begin{align*}
\nabla \psi\cdot\nabla \eta &=  \frac{\nabla \psi}{\psi} \psi  \nabla \eta
= \frac{\nabla \psi}{\psi} (\nabla (\psi \eta) - \nabla \psi \ \eta) \\
&=  \frac{\nabla \psi}{\psi} \nabla (\psi \eta) - \frac{|\nabla \psi|^{2}}{\psi} \eta.
\end{align*}
Hence
\begin{align*}
\psi L \eta &=  - \partial_{t} (\psi \eta) + \Delta (\psi \eta) - \frac{2 f}{1-f} \nabla f\cdot \nabla (\psi \eta)
- \frac{2 \nabla \psi}{\psi} \nabla (\psi \eta) \\
& \qquad +\eta  \partial_{t} \psi - \eta  \Delta \psi + \eta \frac{2 f}{1-f} \nabla f \cdot \nabla \psi
+ \frac{2 |\nabla \psi|^{2}}{\psi} \eta.
\end{align*}
Set
\be
\label{defL1} L_1 = - \partial_{t} + \Delta - \frac{2 f}{1-f} \nabla f \cdot \nabla - \frac{2 \nabla \psi}{\psi} \ \nabla.
\ee
Then
$$ \psi  L \eta = L_1 ( \eta \psi) - \eta L_1\psi, $$
or
$$ L_1(\eta \psi) = \psi L \eta + \eta L_1\psi. $$
With $\lambda$ introduced before in (\ref{deflamb}), we have
\be
\label{L1psilamb} L_1(\psi \lambda) = \psi L \lambda + \lambda L_1\psi = \psi[ H +\xi^T(\alpha P+Q)\xi]
+ \lambda L_1\psi.
\ee Here $H$ is given by (\ref{eqH}).
Now we analyze $L_1\psi$. We write
$$ L_1\psi = -\partial_{t} \psi + \Delta \psi - \frac{2 |\nabla \psi|^{2}}{\psi} - \frac{2 f}{1-f} \nabla f\cdot \nabla \psi.$$
The first three terms are obviously bounded by choosing suitable $\psi$. For the last one, we write
$$ - \frac{2 f}{1-f} \nabla f \cdot \nabla \psi = - \frac{2 f}{1-f} \sqrt{\psi} \ \nabla f \cdot \frac{\nabla \psi} {\sqrt{\psi}}.$$
Note $\displaystyle{\frac{-f}{1-f} < 1}$ and $\displaystyle{\frac{\nabla \psi}{\sqrt{\psi}}}$ is bounded. We need to control
$$ \sqrt{\psi} \nabla f.$$
To this end, we recall the equation for $f$
$$ - \partial_{t} f+ \Delta f= - |\nabla f|^{2}.$$
Then
$$
L f = -\partial_{t} f + \Delta f - \frac{2 f}{1-f} |\nabla f|^{2} = -|\nabla f|^{2} - \frac{2 f}{1-f} |\nabla f|^{2}
=  \frac{-1-f}{1-f} |\nabla f|^{2}.$$
Note
$$\frac{-1-f}{1-f} \geq \frac{1}{2} \quad \text{if}  \ f \leq -3.$$
Then
$$ L f \geq \frac{1}{2} |\nabla f|^{2}.$$
Hence, we obtain
$$ L_1f = L f - \frac{2 \nabla \psi}{\psi} \nabla f = L f - \frac{2}{\psi} \frac{\nabla \psi}{\sqrt{\psi}} \ \sqrt{\psi} \nabla f ,$$
and then
\be
\al
\label{L1f>}
\psi L_1f &=  \psi L f - 2 \frac{\nabla \psi}{\sqrt{\psi}} \ \sqrt{\psi} \nabla f
\geq  \frac{1}{2} \psi |\nabla f|^{2} - 2 \frac{\nabla \psi}{\sqrt{\psi}} \ \sqrt{\psi} \nabla f \\
&\geq  \frac{1}{4} \psi |\nabla f|^{2} - C \frac{|\nabla \psi|^{2}}{\psi}.
\eal
\ee

Now we consider, for some constant $\beta \in \mathbb{R}^+$ to be determined,
$$ \psi L_1(\psi \lambda + \beta f) = \psi^{2} H +
\psi^2\xi^T(\alpha P+Q)\xi+ \psi \lambda L_1\psi + \beta \psi L_1f.$$
In the following, we consider eigenvalues of the 2 form
$$ \psi (\alpha V + W) + \beta f g.$$
If $\xi$ is an eigenvector of $\psi (\alpha V + W)+ \beta f g$ at
some point $(x, t)$, then
$$ [\psi (\alpha V + W) + \beta f g] \xi = \mu \xi,$$
or in local coordinates,
$$ \psi (\alpha V + W) \xi= (\mu - \beta f ) \xi.$$
If $\psi (x, t) \neq 0$, then $\xi$ is also an eigenvector of $\alpha V + W$. Hence
$$ \mu = \psi \lambda + \beta f. $$
Again we extend $\xi$ to a vector field around $x$ by parallel
transporting along geodesics starting from $x$. The vector field
is still denoted by $\xi$.

Let $\Omega$ be the parabolic cube given by
$$ \Omega= Q_{R,T}(x_0,t_0) = B(x_{0}, R) \times (t_{0} - T, t_{0}].$$
Let
$$
\mu = \xi^{T} [ \psi (\alpha V + W) + \beta f g] \xi = \psi
\lambda + \beta f.
$$ Then
$$ \mu \big|_{\partial_{p} \Omega} = \beta f \big|_{\partial_{p} \Omega} < 0.  $$
We will estimate $\mu$ from above. Recall from (\ref{L1psilamb}) and  $\mu
=\lambda + \beta f$ that
\be
\label{psiL1mu=2nd}
 \psi L_1 \mu = \psi^{2} H +\psi^2\xi^T(\alpha P+Q)\xi+
\psi\lambda L_1 \psi  + \beta \psi L_1f.
\ee
We first have, from (\ref{H>2lamb2}), that
$$ \psi^{2} H \geq \frac{2}{\alpha^{2}} (\psi \lambda)^{2} + \psi^{2} \lambda
\left(w - \frac{4}{\alpha^{2}} \xi^{T} W \xi\right)
- f \psi^{2} \lambda \left(w- \frac{4}{\alpha}\xi ^{T} W \xi\right).$$
At points where $\mu \geq 0$, we have
$$ \psi \lambda + \beta f \geq 0,$$
and hence $ \psi \lambda \geq 0$. Then
$$ \psi ^{2} H \geq \frac{2}{\alpha^{2}} (\psi \lambda)^{2}.$$
By this, (\ref{L1f>}) and (\ref{psiL1mu=2nd}), we deduce,
\begin{align*}
\psi L_1\mu &\geq  \frac{2}{\alpha^{2}} (\psi \lambda)^{2} +
\psi^2\xi^T(\alpha P+Q)\xi+ \beta \left[ \frac{1}{4} \psi |\nabla f|^{2}
- C \frac{|\nabla \psi|^{2}}{\psi}\right] \\
&\qquad - \left[ |\partial_{t} \psi| + |\Delta \psi|
+ \frac{2 |\nabla \psi|^{2}}{\psi}
+2 \sqrt{\psi} |\nabla f| \cdot\frac{\nabla \psi}{\sqrt{\psi}}\right]
\psi \lambda.
\end{align*}
For the last term, we use the Cauchy inequality to control the
$\psi \lambda$ factor by the first term to get
\begin{align*}
\psi L_1 \mu \geq & \frac{1}{\alpha^{2}} (\psi \lambda)^{2} + \psi^2\xi^T(\alpha P+Q)\xi
+\beta \left( \frac{1}{4} \psi |\nabla f|^{2} - C \frac{|\nabla \psi|^{2}}{\psi}\right) \\
& - C \left( |\partial_{t} \psi| + |\Delta \psi| + \frac{2 |\nabla \psi|^{2}}{\psi}\right)^{2}
- C \psi |\nabla f|^2
\frac{|\nabla \psi|^2}{\psi}.
\end{align*}
We now take
\be
\label{beta=1}
\beta=c\sup  \frac{|\nabla \psi|^{2}}{\psi}.
\ee
If $c$ is sufficiently large, we have
$$\beta \left( \frac{1}{4} \psi |\nabla f|^{2} - C \frac{|\nabla \psi|^{2}}{\psi}\right)- C \psi |\nabla f|^2
\frac{|\nabla \psi|^2}{\psi}\ge -
C'\sup  \frac{|\nabla \psi|^{4}}{\psi^2},$$
and hence
\begin{align*}
\psi L_1 \mu &\geq  \frac{1}{\alpha^2} (\psi \lambda)^{2}
+\psi^2\xi^T(\alpha P+Q)\xi+ \frac{1}{8} \beta \psi |\nabla f|^{2} \\
&\quad\quad - C \left[ |\partial_{t} \psi|+ |\Delta \psi|
+\frac{|\nabla \psi|^{2}}{\psi} \right]^{2}-
C\sup  \frac{|\nabla \psi|^{4}}{\psi^2}.
\end{align*}

Let $\psi$ be a cut off function supported in the
space-time cube $Q_{R, T}(x_0, t_0)$ such
that $\psi=1$ in the cube of half the size $Q_{R/2, T/2}(x_0, t_0)$.
We also require that
\[
 |\nabla \psi| \le \frac{C}{R}, \quad |\Delta
\psi | \le C \frac{K_0+1}{R^2}, \quad \frac{|\partial_t \psi
|}{\sqrt{\psi}} \le C \frac{1}{T}, \quad \frac{|\nabla
\psi|^2}{\psi} \le \frac{C}{R^2}.
\] Here $K_0$ is a bound on $|Ric|$ as before. Also the quotients are regarded
as $0$ when $\psi=0$ somewhere.  Without loss of generality we
just take $t_0=T$. We also require that $\psi$ is supported in the
slightly shorter space time cube $Q_{R, 3T/4}(x_0, t_0)$.  As
usual, the cutoff function can be constructed from the distance
function, which is not always smooth. One can either mollify the
distance by convoluting with a smooth kernel or use the well known
trick by Calabi to get around singular points of the distance.

Let $\mu_{0}$ be the maximal eigenvalues of $\psi (\alpha V+ W) + \beta f g$
with a unit eigenvector $\xi$.
Assume $\mu_{0}$ is taken at the space-time point
$(p_1, t_1)$. Again, by parallel translation,  we extend
$\xi$ to a vector field in a neighborhood of $p_1$, which is still denoted by $\xi$.
We are interested only in the case $\mu_{0} > 0$. Since $f \le 0$ and $\psi=0$ on
the parabolic boundary of $\Omega$, we know that $p_1$ must lie in the interior of $B(p, R)$. Define a function $\mu=\mu(x, t)$ around $(p_1, t_1)$ by
\[
\mu=
\xi^T \left( \psi (\alpha V+ W) + \beta f g \right) \xi.
\]
Since $(p_1, t_1)$ is a maximum point of $\mu$, we have
\begin{align*}
0 \geq \psi L_1 \mu &\geq \frac{1}{\alpha^{2}} (\psi \lambda)^{2} +\psi^2\xi^T(\alpha P+Q)\xi\\
&\qquad- C \left[ |\partial_{t} \psi|
+ |\Delta \psi| + \frac{|\nabla \psi|^{2}}{\psi} \right]^{2}-
C\sup  \frac{|\nabla \psi|^{4}}{\psi^2}.\end{align*}
Hence, at $(p_1, t_1)$, it holds
\be
\label{psilamb2<case1} \frac{1}{\alpha^{2}} (\psi \lambda)^{2} \le\psi^2|\xi^T(\alpha P+Q)\xi|+C \left(\frac{1}{T} + \frac{1}{R^{2}} \right)^2.
\ee  From (\ref{|aP+Q|}),
\[
\al
&\psi | \xi^{T}   (\alpha P + Q) \xi |\\
&\quad\le   \left| \xi_i \xi_j  \left[- 2
R_{kijl} \psi (\alpha v_{kl} + w_{kl}) + R_{il} \psi (\alpha v_{jl}  + w_{jl})+ R_{jl} \psi (\alpha v_{il}+ w_{il}) \right] \right|  \\
&\qquad\quad+ \psi \left| \xi_i \xi_j  \left[- 2 R_{kijl}  w_{kl} +
R_{il}  w_{jl}+ R_{jl}  w_{il} \right] \right|  + C\psi |Ric| |W| +  C\psi |\nabla Ric| \sqrt{|W|}\\
&\quad\le \left| \xi_i \xi_j  \left[- 2
R_{kijl} \psi(\alpha v_{kl} + w_{kl}) + R_{il} \psi(\alpha v_{jl}  + w_{jl})+ R_{jl} \psi (\alpha v_{il}+ w_{il}) \right] \right| \\
&\qquad \quad + C (K_0+K_1+K_2) \psi |W| + C K_2.
\eal
\]
By splitting off a term $\beta f$, we obtain
\[
\al
&\psi | \xi^{T}   (\alpha P + Q) \xi |\\
&\le \bigg| \xi_i \xi_j  \big[- 2
R_{kijl} \{ \psi (\alpha v_{kl} + w_{kl})- \beta f \delta_{kl}\} + R_{il} \{\psi (\alpha v_{jl}  + w_{jl})- \beta f \delta_{kl}\}\\
&\qquad \qquad + R_{jl} \{\psi (\alpha v_{il}+ w_{il}) - \beta f \delta_{kl}\} \big] \bigg| \\
& \qquad +  C (K_0+K_1) \beta \psi |f|+ C (K_0+K_1+K_2) \psi |W| + C\psi K_2.
\eal
\] Let $\mu_1, ..., \mu_n$ be the eigenvalues of the $2$ form
$\psi(\alpha V + W) + \beta f g$
at $(p_1, t_1)$, which are listed in increasing order. We assume without loss
of generality that $\mu_1<0$. Then the above inequality implies
\begin{align*}
\psi | \xi^{T}   (\alpha P + Q) \xi | &\le C (K_0 + K_1) (\mu_n + |\mu_1|) +
 C (K_0+K_1) \beta  |f|\\
 &\qquad + C (K_0+K_1+K_2) \psi |W| + C \psi K_2.
\end{align*}
Observe that
\begin{align*} \mu_1 + (n-1) \mu_n &\ge \mu_1+\cdots+\mu_n \\
&= \operatorname{tr}\left[  \psi\left(\frac{\alpha
u_{ij}}{u(1-f)}+\frac{u_iu_j}{u^2(1-f)^2} \right) + \beta f \delta_{ij} \right] \\
&\ge \psi \frac{\alpha
\Delta u}{u(1-f)}  + n \beta f .
\end{align*}
 Hence,
$$|\mu_1| \le (n-1) \mu_n -\psi \frac{\alpha \Delta u}{u(1-f)} + n \beta |f|.$$
Therefore,
\begin{align}
\label{(aP+Q)case1<}\begin{split}
\psi | \xi^{T}   (\alpha P + Q)
\xi |&\le C K_1 \left(\mu_n - \psi \frac{\alpha \Delta u}{u(1-f)}
 \right) \\
 &\qquad + C (K_1+K_2) \psi |W| + C K_2 + C K_1 \beta |f|.
\end{split}\end{align}

Since $Ric \ge -K_0$, by \cite{Li-Yau1986} and our choice of the cutoff function
$\psi$, we have, for any $a>1$,
$$
\psi^2\left(\frac{|\nabla u|^2}{u^2} - a\frac{ u_t }{u} \right)\le C\left(\frac1T+\frac{1}{R^2}+K_0\right).
$$ We note that the time $1/T$ is actually $1/t$ in \cite{Li-Yau1986}. However,
since our cutoff function is supported in a shorter cube,
these two terms are equivalent.
With $u_t =\Delta u$ and $\mu=\psi \lambda + \beta f \le \psi \lambda$, we get, at $(p_1, t_1)$,
\begin{align*}
\psi^2| \xi^{T}   (\alpha P + Q) \xi | &\le C K_1\psi^2\lambda +CK_1\left(\frac1T+\frac{1}{R^2}\right)\\
&\qquad+C(K_1K_0+K_2)
+ C K_2 \psi^2|W|+ C K_1 \beta |f|.
\end{align*}
From the definition of $W=(w_{ij})$, we have
$$|W|\le \frac{|\nabla u|^2}{u^2(1-f)^2}.$$
By Theorem 1.1 in \cite{Souplet-Zhang2006}, we obtain
$$\psi^2|W|\le C\left(\frac1T+\frac{1}{R^2}+K_0\right),$$
and hence
\begin{align*}
\psi^2| \xi^{T}   (\alpha P + Q) \xi | &\le C K_1\psi^2\lambda +C(K_1+K_2)\left(\frac1T+\frac{1}{R^2}\right)\\
&\qquad +C(K_1K_0+K_2+K_2K_0) + C K_1 \beta |f|.
\end{align*}
Substituting this to (\ref{psilamb2<case1}), we find, at $(p_1, t_1)$, that
\begin{align*}
\frac{1}{\alpha^{2}} (\psi \lambda)^{2}&\le C K_1\psi^2\lambda
+C(K_1+K_2)\left(\frac1T+\frac{1}{R^2}\right)
+C\left(\frac1T+\frac{1}{R^2}\right)^2\\
&\qquad+C(K_1K_0+K_2+K_2K_0)+ C K_1 \beta |f|,\end{align*}
and hence
$$\psi \lambda\le C \left(\frac1T+\frac{1}{R^2}+B\right) + C \sqrt{K_1 \beta |f|},$$
where $B$ is a nonnegative constant depending only on $K_0, K_1$ and $K_2$
with the property that $B=0$ if $K_0=K_1=K_2=0$. This implies
\[
\mu = \psi \lambda + \beta f \le C \left(\frac1T+\frac{1}{R^2}+B\right) + C \sqrt{K_1 \beta |f|}
+ \beta f.
\]
Since $f<0$, we  know that $C \sqrt{K_1 \beta |f|}
+ \beta f \le C K_1$. Thus
$$ \mu_{0} = \mu \big|_{(p_1, t_1)} = (\psi\lambda + \beta f) \big|_{(p_1, t_1)} \leq C \left( \frac{1}{T} + \frac{1}{R^{2}}+B \right).$$
Here $B$ may have changed from the last line.
Therefore
$$ \mu \leq C \left( \frac{1}{T} + \frac{1}{R^{2}} +B\right) \quad \text{in} \ Q_{R,T}.$$
Hence, for any unit tangent vector $\xi$ at $x$ with $(x, t) \in Q_{R,T}$, it holds
$$ \psi \xi^{T} (\alpha V + W) \xi + \beta f \leq C \left( \frac{1}{T} + \frac{1}{R^{2}} +B\right) \quad
\text{in} \ Q_{R,T},$$
or
$$ \psi \xi^{T} (\alpha V + W) \xi \leq C \left(\frac{1}{T} + \frac{1}{R^{2}} +B\right)+ \beta |f|, \quad \text{in} \ Q_{R,T}. $$
Recall from (\ref{beta=1}) that $\beta= \frac{C}{R^2}$, we then have
$$ \psi \xi^{T} V \xi \leq C \left( \frac{1}{T} + \frac{1}{R^{2}} +B\right) (1-f),$$
and hence
$$ \psi \frac{u_{ij} \xi_{i} \xi_{j}}{u} \leq C \left( \frac{1}{T} + \frac{1}{R^{2}} +B\right) (1-f)^{2}.$$
This implies the desired estimate. \qed

\begin{remark} When we compare the local version with the global version,
we note an extra power of $\displaystyle{1+ \log\frac{A}{u}}$ in
Part (b) of Theorem \ref{ThmHessfixed}. This turns out to be optimal. Consider
$x_{0} =2 $, $R=1$, $t_{0}=2$, $T=1$ and $Q_{1,1} (2,2) = [1,3]
\times [1,2] \subset \mathbb{R}_{x} \times \mathbb{R}_{t}.$ For $a>0$, set
$$ u(x,t) = e^{ax+a^2t}.$$
This is a positive solution of the heat equation in $Q_{1,1} (2,2)$.
Note
$$ \frac{u_{xx}(2,2)}{u(2,2)} = a^{2},$$
and
$$ A=\sup_{Q_{1,1} (2,2)} u = e ^{3a + 2a^{2}}, \quad
\log \frac{A}{u(2,2)} = \log \frac{e^{3a+2a^{2}}}{e^{2a+2a^{2}}}= a. $$
Hence, $\displaystyle{\frac{u_{xx}(2,2)}{u(2,2)}}$ and
$\displaystyle{\left[ \log \frac{A}{u(2,2)}\right]^{2}}$ have the same order in $a$.
\end{remark}

\section{Heat Equations under Ricci Flow}\label{sec-RicciFlow}

In this section we consider the heat equation coupled with the Ricci
flow on a manifold $M$, over a time interval $(0, T]$,
\begin{equation}
\label{heateqRicci}
\begin{cases}
\Delta u - \partial_t u = 0\\
\partial_t g = - 2 Ric.
\end{cases}
\end{equation}
The heat equation and its conjugate have served as a fundamental
tool in the theory of Ricci flow developed by Hamilton and
Perelman. The two authors and others have derived gradient
estimate for positive solutions of the heat and conjugate heat
equation. See the paper \cite{CH:1}, \cite{P:1}, \cite{Ni:1},
\cite{CaH:1}, \cite{Ca:1} and \cite{BCP:1} for instance.
 Here we prove an upper bound on the
Hessian of the log solution, which seems to be missing as the
fixed metric case. The general idea of the proof is similar to
that in the previous section. However, since the metric is
evolving, there will be extra terms to deal with, especially the
term $R_{ij} u_{ij}$. To treat this term, we need to use the
latest result in \cite{BCP:1} which provides a Li-Yau type
gradient bound in the Ricci flow case.

We will keep the same notations as in the last section. First let
us derive the evolution equation for $(v_{ij})$ and $(w_{ij})$. In
this situation, the corresponding equations for $(v_{ij})$ have
fewer terms than those in Lemmas \ref{Lemma-v(ij)} and
\ref{Lemma-w(ij)} in the fixed metric case. More specifically the
terms involving the gradient of the Ricci curvature drops out.
This is due to a cancellation introduced by the Ricci flow. The
equation for $(w_{ij})$ will formally stay the same.

\begin{lemma}
\label{Lemma-v(ij)RF} Suppose $u$ is a positive solution to
(\ref{heateqRicci}) such that $0<u \le A$.  Set $f=\log(u/A)$ and
$$ v_{ij} = \frac{u_{ij}}{ u(1-f)}$$  with the matrix $V=(v_{ij})$ representing the $2$ form $\frac{Hess \, u}{ u(1-f)}$ in local coordinates. Then,
$$
\begin{aligned}
 &(- \partial_{t}+\Delta-\frac {2f}{1-f} \nabla
f\cdot\nabla) v_{ij} \\
&= \frac{|\nabla f|^{2}}{1-f} v_{ij} + \frac{1}{u (1-f)} [- 2
 R_{kijl} u_{kl} + R_{il} u_{jl} + R_{jl} u_{il} ].
\end{aligned}
$$
\end{lemma}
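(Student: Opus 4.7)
The plan is to mimic the proof of Lemma~\ref{Lemma-v(ij)} line by line, the only real change being the commutation formula for the Hessian of $u$, which must now take into account the evolving metric. Since $v_{ij}=u_{ij}/[u(1-f)]$ and the prefactor $1/[u(1-f)]$ is scalar, the purely algebraic manipulations that turn $\partial_t v_{ij}$, $\partial_k v_{ij}$, and $\Delta v_{ij}$ into the form
$$
\left(-\partial_t + \Delta - \frac{2f}{1-f}\nabla f\cdot\nabla\right) v_{ij} \;=\; \frac{|\nabla f|^2}{1-f}\,v_{ij} \;+\; \frac{1}{u(1-f)}\bigl(-\partial_t u_{ij}+\Delta u_{ij}\bigr)
$$
are identical to those in Lemma~\ref{Lemma-v(ij)}; one can simply repeat them, using $u_k=uf_k$, $u_t=\Delta u$, $\partial_t u(1-f)=-u_t f$, and $\partial_k u(1-f)=-u_k f$ as before. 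So the whole job reduces to evaluating $(-\partial_t+\Delta)u_{ij}$ under the Ricci flow.

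The main step, and the only place where the Ricci flow enters substantively, is the modified commutation formula. I would compute $\partial_t u_{ij}=\partial_t(\partial_i\partial_j u-\Gamma^k_{ij}\partial_k u)$ and use the standard identity
$$
\partial_t \Gamma^k_{ij} \;=\; -\,g^{kl}\bigl(\nabla_i R_{jl}+\nabla_j R_{il}-\nabla_l R_{ij}\bigr),
$$
valid under $\partial_t g_{ij}=-2R_{ij}$. Combined with $\partial_t u=\Delta u$, this yields
$$
\partial_t u_{ij} \;=\; \nabla_i\nabla_j\Delta u \;+\; \bigl(\nabla_i R_{jl}+\nabla_j R_{il}-\nabla_l R_{ij}\bigr)u_l
$$
in an orthonormal frame at the point. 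Subtracting this from the purely spatial commutator
$$
\Delta u_{ij}-\nabla_i\nabla_j\Delta u \;=\; -2R_{kijl}u_{kl}+R_{il}u_{jl}+R_{jl}u_{il}+\bigl(\nabla_i R_{jl}+\nabla_j R_{il}-\nabla_l R_{ij}\bigr)u_l,
$$
which is the formula already quoted from \cite{CH:1}, the two gradient-of-Ricci terms cancel exactly, leaving
$$
-\partial_t u_{ij}+\Delta u_{ij} \;=\; -2R_{kijl}u_{kl}+R_{il}u_{jl}+R_{jl}u_{il}.
$$
This is precisely the cancellation advertised in the paragraph preceding the lemma.

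Inserting this identity into the formal expression above for $Lv_{ij}$ produces the stated equation. The main obstacle, modest though it is, is keeping track of signs and index conventions in the derivation of $\partial_t \Gamma^k_{ij}$ and making sure the commutator identity from \cite{CH:1} is cited in the form that allows the gradient-of-Ricci terms to cancel rather than double. Beyond this, no new estimates are required: the calculation is a bookkeeping exercise that reuses the algebra of Lemma~\ref{Lemma-v(ij)} verbatim, with the simpler right-hand side reflecting the Ricci flow's cancellation.
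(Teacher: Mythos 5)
Your proposal is correct and follows essentially the same route as the paper: the paper's proof likewise reuses the algebra of Lemma \ref{Lemma-v(ij)} verbatim and notes that the only change is the Ricci-flow commutation formula $-\partial_t u_{ij}+\Delta u_{ij}+2R_{kijl}u_{kl}-R_{ik}u_{jk}-R_{jk}u_{ik}=0$, which it cites from \cite{CLN:1} rather than deriving. Your derivation of that formula via $\partial_t\Gamma^k_{ij}$, exhibiting the cancellation of the $\nabla Ric$ terms, is a correct filling-in of the cited fact.
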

\proof

This is similar to  that of Lemma \ref{Lemma-v(ij)}. The only difference is that the
commutation formula now is
\[
 - \partial_t u_{ij}\Delta u_{ij}
 +  2 R_{kijl} u_{kl} - R_{ik} u_{jk} - R_{jk} u_{ik} = 0.
\]See p109 of \cite{CLN:1} e.g.
\qed

\begin{lemma}
\label{Lemma-w(ij)RF} Suppose $u$ is a positive solution to
(\ref{heateqRicci}) such that $0<u \le A$.  Set $f=\log(u/A)$ and
$$
 w_{ij} = \frac{u_{i} u_{j}}{u^2(1-f)^{2}}
 $$  with the matrix $W=(w_{ij})$ representing the $2$ form $\frac{du \otimes du}{ u^2(1-f)^2}$ in local coordinates. Then,
\[
\al
(- \partial_{t}+&\Delta-\frac {2f}{1-f} \nabla f\cdot\nabla) w_{ij}\\
 &=  \frac{2 |\nabla f|^{2}}{1-f} w_{ij}
+  2(v_{ki} + f w_{ki})(v_{kj} +f w_{kj}) + R_{ik} w_{kj} + R_{jk} w_{ki}.
\eal
\]
\end{lemma}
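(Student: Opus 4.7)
The claimed identity is \emph{literally} the same as in Lemma~\ref{Lemma-w(ij)}, so my plan is to transcribe the fixed-metric computation and verify that the evolving metric contributes no new terms. The guiding intuition is this: in Lemma~\ref{Lemma-v(ij)RF} the Hessian $u_{ij}=\partial_i\partial_j u-\Gamma^k_{ij}\partial_k u$ involves Christoffels, so $\partial_t u_{ij}$ picks up a $\partial_t\Gamma$ piece proportional to $(\nabla_iR_{jk}+\nabla_jR_{ik}-\nabla_kR_{ij})u_k$ which exactly cancels the matching $\nabla\mathrm{Ric}$ terms coming from the spatial commutation formula. The object $w_{ij}=\tfrac{u_iu_j}{u^2(1-f)^2}$, by contrast, contains only coordinate derivatives of the scalar $u$ together with scalar quantities $u$ and $f=\log(u/A)$, so its time derivative has no $\partial_t\Gamma$ or $\partial_tg$ contribution. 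This is why the fixed-metric formula goes through unchanged.

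I would carry out the proof in three steps. First, I compute $\partial_tw_{ij}$ exactly as in Lemma~\ref{Lemma-w(ij)}, using the three scalar identities $\partial_tu=\Delta u$, $\partial_tu_i=\partial_i(\partial_tu)=(\Delta u)_i$, and $\partial_tf=\tfrac{u_t}{u}=\Delta f+|\nabla f|^2$ (all of which hold verbatim under Ricci flow since they involve no time derivative of the metric); analogously I compute $\partial_kw_{ij}$. Second, I evaluate $\Delta w_{ij}$ at a point in a local orthonormal frame via the Bochner-type expansion
\[
\Delta(u_iu_j)=(\Delta u_i)u_j+2u_{ki}u_{kj}+u_i(\Delta u_j),
\]
combined with the Ricci identity $\Delta u_i=(\Delta u)_i+R_{ik}u_k$. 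This commutation is purely spatial and reads the same way whether or not the metric is evolving, so it produces the same $R_{ik}u_ku_j+R_{jk}u_ku_i$ curvature terms as before. Third, I repeat the algebraic reorganization from the end of the proof of Lemma~\ref{Lemma-w(ij)}: substituting $u_k=uf_k$, splitting the coefficient $4=2+2$, $6=4+2$, and absorbing the ``first'' half of each term into $\tfrac{2u_kf}{u(1-f)}\partial_kw_{ij}$ via the formula for $\partial_kw_{ij}$ recovers the drift $\tfrac{2f}{1-f}\nabla f\cdot\nabla w_{ij}$, the reaction term $\tfrac{2|\nabla f|^2}{1-f}w_{ij}$, and the quadratic form $2(v_{ki}+fw_{ki})(v_{kj}+fw_{kj})$ in precisely the same way.

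The only real obstacle is bookkeeping: one must check that at no point does the computation secretly use $g^{ij}$ or a second covariant derivative of $u_iu_j$ in a way that would call on $\partial_tg$ or $\partial_t\Gamma$. Two places deserve care. (i) In computing the rough Laplacian $\Delta w_{ij}$ of the $(0,2)$-tensor, one normally picks up Riemann terms from the commutator $[\nabla_k\nabla_k,\cdot]$ on indices $i,j$; however, the standard trick of working in a frame parallel at the base point reduces $\Delta(u_iu_j)$ to $\nabla_k\nabla_k(u_iu_j)$, after which only the Ricci terms from the Bochner identity on the vector $\nabla u$ survive. (ii) The identity $\partial_tf=\Delta f+|\nabla f|^2$ uses the current-time Laplacian and gradient norm, so it is consistent at each slice. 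Once these two points are verified, the computation of Lemma~\ref{Lemma-w(ij)} transcribes line for line and yields the stated formula.
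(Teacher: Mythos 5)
Your proposal is correct and takes essentially the same route as the paper: the paper's own proof consists precisely of the observation that $w_{ij}$ is built from first coordinate derivatives of the scalar $u$ (so $du$ commutes with $\partial_t$ and $\partial_t w_{ij}$ picks up no $\partial_t g$ or $\partial_t\Gamma$ contribution), after which the computation of Lemma \ref{Lemma-w(ij)} transcribes verbatim. Your extra bookkeeping on the spatial Bochner/Ricci terms is consistent with, and somewhat more detailed than, what the paper records.
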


\proof

This is formally identical to  that of Lemma \ref{Lemma-w(ij)}.
The reason is that $W=(w_{ij})$ represents the $2$ form
$\frac{du \otimes du}{ u^2(1-f)^2}$ and
$d u$
commutes with the time derivative. Namely
\[
\partial_t \left( \frac{du \otimes du}{ u^2(1-f)^2} \right)
 = \frac{d\partial_t u \otimes du}{ u^2(1-f)^2} +
\frac{ u \otimes d \partial_t u}{ u^2 (1-f)^2} + du \otimes du \,
 \partial_t \left(\frac{1}{u^2 (1-f)^2}\right).
\] Hence
\[
\partial_t w_{ij}= \partial_t \left( \frac{u_i u_j}{ u^2(1-f)^2} \right) = \frac{(\partial_t u)_i u_j}{ u^2(1-f)^2} +
\frac{ u_i (\partial_t u)_j}{ u^2(1-f)^2} + u_i u_j \, \partial_t \left(\frac{1}{u^2 (1-f)^2}\right).
\]  This is identical to the corresponding term in Lemma \ref{Lemma-w(ij)}.
The computation for all other terms are the same also.
\qed

In this section we will work with space time cubes that evolve
with time.  Recall the following notation. Let $(x_0, t_0)$ be a
space time point. For $R>0$ and $T>0$, we write
\[
Q_{R,T}(x_0,t_0) =\{(x, t) \, | \, d(x_0, x, t) < R, \, t_0-T < t
\le t_0 \},
\]

We now prove Theorem \ref{Thm-LogHessRic}.

\proof[Proof of Theorem \ref{Thm-LogHessRic}]
{\bf Part (a).}  Again we set $w=tr (W)$, and
\begin{equation}
\label{operL}
 L= -\partial_{t} + \Delta - \frac{2f}{1-f} \nabla{f} \cdot \nabla.
\end{equation}
According to Lemmas \ref{Lemma-v(ij)RF} and \ref{Lemma-w(ij)RF},
the following equalities hold
\begin{align}
\label{eqVRF}
LV&= (1-f) w V + P, \\
\label{eqWRF}
LW&= 2(1-f)w W +2(V+fW)^{2} +Q,
\end{align}
where $P$ and $Q$ are matrices whose $(i, j)$-th components are given by
\begin{align}\label{eqPRF}\begin{split}
P_{ij}&= \frac{1}{u (1-f)} [- 2
 R_{kijl} u_{kl} + R_{il} u_{jl} + R_{jl} u_{il} ]\\
&=- 2
 R_{kijl} v_{kl} + R_{il} v_{jl} + R_{jl} v_{il} ,
\end{split}\end{align} and
\begin{equation}
\label{eqQRF}Q_{ij}= R_{ik} w_{kj} + R_{jk} w_{ki}.
\end{equation}

For a constant $\alpha \in \mathbb{R}$ to be determined, we have
$$ L(\alpha V + W)= \alpha (1-f) w V + 2(1-f) w W + 2(V+fW)^{2}
+\alpha P + Q.$$ Pick $p \in M$ and a time $t$ where the Ricci
flow is defined. Let $\xi \in T_p M$ be a unit tangent vector at
the point $p$. Under the metric $g(t)$,  we use parallel
translation along geodesics emanating from $p$ to extend $\xi$ to
a smooth vector field in the local coordinate neighborhood. Then
we extend the vector field in time trivially by making it a
constant vector field in time. We still denote the vector field by
$\xi$. Since $V$ and $W$ are $(2, 0)$-tensor fields ($2$-forms),
the function
$$
\lambda = \xi^{T} (\alpha V + W) \xi \equiv (\alpha V + W) (\xi, \xi)
$$
is a well-defined smooth function in a space-time neighborhood of
$(p, t)$.
Since $\xi$ is a parallel vector field  at time $t$,  it holds, at this time $t$,
$$ L \lambda = H +  \xi^{T} (\alpha P + Q) \xi.$$
Here
$$ H= \alpha (1-f) w \xi^{T} V \xi +2 (1-f) w \xi^{T} W \xi + 2 |(V+fW) \xi|^{2}.$$
By $ \alpha \xi^{T} V \xi = \lambda - \xi^{T} W \xi$,
we have
\begin{align*}
H&=  (1-f) w (\lambda - \xi^{T} W \xi) + 2(1-f) w \xi^{T} W \xi + 2 |(V +f W) \xi|^{2} \\
&=  (1-f) w \lambda + (1-f) w \xi^{T} W \xi + 2 |(V+ fW)\xi|^{2}.
\end{align*}

To simplify the last term further, we fix the space time point
$(p, t)$ and assume $\xi$ is chosen as follows. Under the metric
$g(t)$, we let $\xi$ be the time independent vector field
generated via parallel translation through geodesics emanating
from $p$, by an eigenvector of $\alpha V + W$ at $(p, t)$, i.e.,
at $(p, t)$,
$$ (\alpha V + W) \xi =\lambda \xi.$$
 Then by (\ref{H>2lamb2})  in the derivation of (\ref{Llamb}), we find that, if $ \lambda \geq 0$, then at the point $(p, t)$, it
holds, for $\alpha \ge 4$
\[
H \ge \frac{ 2 \lambda^2}{\alpha^2}.
\]Consequently, at time $t$, it holds
\begin{equation}
\label{LlambRF}
 L \lambda \geq \frac{2 \lambda^{2}}{\alpha^{2}} +  \xi^{T} (\alpha P + Q) \xi.
 \end{equation}

Let $\tau$ be a universal constant to be fixed later. With
$\alpha=4$, suppose the $2$-form
$$\alpha V + W-\frac{\tau}{t}g(t)$$ assumes the
eigenvalue $\mu \equiv \lambda- \frac{\tau}{t_1}$ at $ (p_1,
t_1)$, which is the largest for all $x \in M$ and $t \in (0, T]$.
Then $\lambda$ is an eigenvalue of $\alpha V + W$. Under the
metric $g(t_1)$, let $\xi$ be a unit eigenvector of $\alpha V +
W$, which corresponds to $\lambda$.
 Under $g(t_1)$ again, we use
parallel translation along geodesics emanating from $p_1$ to
extend $\xi$ to a smooth vector field in a neighborhood of $p_1$.
We still denote it by $\xi$. Now we regard $\xi$ as a time
independent vector field defined in a space time neighborhood of
$(p_1, t_1)$.
Set
$$ \lambda = \xi^{T} (\alpha V + W) \xi \equiv (\alpha V + W)(\xi, \xi).$$
Then, in the space-time neighborhood where $\xi$ is defined, we
have
\begin{equation}
\label{LLamb=H+}
 L \left(\lambda-\frac{\tau}{t} g(t)(\xi, \xi) \right)  = L
\lambda- \frac{\tau}{t^2} g(t)(\xi, \xi) -\frac{\tau}{t} 2 Ric
(\xi, \xi).
\end{equation}

We now evaluate at $(p_1, t_1)$. Since $\frac{\lambda
-\frac{\tau}{t} g(t)(\xi, \xi)}{g(t)(\xi, \xi)}$ has its
nonnegative maximum at $(p_1, t_1)$, we have,
\begin{equation}
\label{ddlam+dlam}
\Delta
\left(\lambda-\frac{\tau}{t} g(t)(\xi, \xi) \right) \le 0,
\qquad
\nabla \left(\lambda-\frac{\tau}{t} g(t)(\xi, \xi) \right)=0
\end{equation} and
\[
\partial_t \left[ \frac{\left(\alpha V + W -
\frac{\tau}{t} g(t)\right)(\xi, \xi) }{g(t)(\xi, \xi)} \right] \ge
0.
\] Here we point out that even though $\xi$ is a time independent
vector field, its norm changes under $g(t)$. This is the reason
why we need to normalize its norm in the above inequality.
Therefore
\[
  \frac{\partial_t \left[ \left(\alpha V + W -
\frac{\tau}{t} g(t)\right)(\xi, \xi)  \right] }{g(t)(\xi, \xi)} +
\left(\alpha V + W - \frac{\tau}{t} g(t)\right)(\xi, \xi) \frac{2
Ric (\xi, \xi)}{ |g(t)(\xi, \xi)|^2} \ge 0.
\]Since the computation is at $(p_1, x_1)$, this implies
\[
\partial_t (\lambda-\frac{\tau}{t} g(t)(\xi, \xi) ) + (\lambda-\frac{\tau}{t}g(t)(\xi, \xi)) 2
Ric (\xi, \xi) \ge 0.
\]
Substituting this together with (\ref{operL}) and (\ref{ddlam+dlam}) into the left hand side of
(\ref{LLamb=H+}),
we find that
\[
(\lambda-\frac{\tau}{t}) 2
Ric (\xi, \xi) \ge  L
\lambda- \frac{\tau}{t^2} g(t)(\xi, \xi) -\frac{\tau}{t} 2 Ric
(\xi, \xi).
\]
By (\ref{LlambRF}), this induces
$$ \lambda 2 Ric (\xi, \xi)
\geq \frac{2\lambda^{2}}{\alpha^{2}}-\frac{\tau}{t^2}-|\xi^{T}
(\alpha P + Q) \xi | \quad\text{at }(p_1,t_1), $$ or

\begin{equation}
\label{lamb2<}  \frac{2\lambda^{2}}{\alpha^{2}} \le \frac{\tau}{t^2}+|\xi^{T} (\alpha P + Q) \xi |
+ 2 \lambda |Ric(\xi, \xi)|
\quad\text{at }(p_1,t_1).
\end{equation}

To bound $\lambda$ from
above, we need to find an upper bound for $ |\xi^{T} (\alpha P +
Q) \xi | $ at $(p_1, t_1)$. Let $\xi=(\xi_1, ..., \xi_n)$.  By
(\ref{eqPRF}) and (\ref{eqQRF}), we obtain
\begin{align*}
&| \xi^{T}   (\alpha P + Q) \xi |\le | \alpha\xi^{T} P \xi |
+| \xi^{T} Q \xi |\\
&\quad\le \left| \xi_i \xi_j \alpha \left(- 2 R_{kijl} v_{kl} +
R_{il} v_{jl} + R_{jl} v_{il} \right) \right|
+ \left|\xi_i \xi_j ( R_{ik} w_{kj} +R_{jk} w_{ki}) \right| \\
&\quad\le  \left| \xi_i \xi_j \alpha \left[- 2
 R_{kijl} v_{kl} + R_{il} v_{jl} + R_{jl} v_{il}  \right] \right|  + C |Ric| |W|.
\end{align*}
Writing $\alpha v_{kl} = \alpha v_{kl} + w_{kl} - w_{kl}$ etc  in
the last line, we deduce
\be
\al
\label{aP+Q<}
&| \xi^{T}   (\alpha P + Q) \xi |\\
&\quad\le  \left| \xi_i \xi_j  \left[- 2
R_{kijl} (\alpha v_{kl} + w_{kl}) + R_{il} (\alpha v_{jl}  + w_{jl})+ R_{jl} (\alpha v_{il}+ w_{il}) \right] \right|  \\
&\qquad\quad+ \left| \xi_i \xi_j  \left[- 2 R_{kijl}  w_{kl} +
R_{il}  w_{jl}+ R_{jl}  w_{il} \right] \right|  + C |Ric| |W|.
\eal
\ee
At the point $(p_1, t_1)$, we can choose a coordinate system so that the
matrix $\alpha V + W$ is diagonal. Let $\mu_1, \cdots,
\mu_n$ be the eigenvalues of the matrix $\alpha V+W-\frac{\tau}{t} g$, listed
in the increasing order. We claim that the absolute value of $\mu_1$ is bounded from above by $\mu_n$ plus a controlled quantity.  Without loss of generality, we assume
$\mu_1<0$ and $\mu_n=\mu>0$.
Then
$$
\al
| R_{kijl} (\alpha v_{kl} + w_{kl}) | &\le | R_{kijl} (\alpha v_{kl} + w_{kl} - \frac{\tau}{t} \delta_{kl}) |
+ | R_{kijl} \frac{\tau}{t} \delta_{kl} |\\
&\le| \sum^{n}_{k=1}  R_{kijk}
(\alpha v_{kk} + w_{kk}- n \frac{\tau}{t})|  + C |Rm| \frac{\tau}{t} \\
&\le C |Rm| (\mu_n + |\mu_1| +\frac{\tau}{t} ).
\eal
$$
Similarly
$$
|R_{il} (\alpha_{il} v_{jl}  + w_{jl})| \le C |Ric| (\mu_n+
|\mu_1| +\frac{\tau}{t}).
$$
Combining the last three inequalities, we deduce
$$
| \xi^{T}   (\alpha P + Q) \xi | \le C |Rm| (\mu_n
+|\mu_1| + \frac{\tau}{t}) + C  |Rm| |W|.
$$
In the following, we set $K_1=|Rm|_{L^\infty}$. Then
\begin{align*}| \xi^{T}   (\alpha P + Q) \xi | &\le C K_1 (\mu_n +|\mu_1|+\frac{\tau}{t})+ C K_1 |W|\\
&\le C K_1 (\mu_n +|\mu_1|+\frac{\tau}{t}) + C K_1 |W|.
\end{align*}
 Note
\begin{align*} \mu_1 + (n-1) \mu_n &\ge \mu_1+\cdots+\mu_n \\
&= \operatorname{tr} \left(\frac{\alpha
u_{ij}}{u(1-f)}+\frac{u_iu_j}{u^2(1-f)^2} - \frac{\tau}{t} \delta_{ij} \right) \ge \frac{\alpha
\Delta u}{u(1-f)}  - n \frac{\tau}{t}.
\end{align*}
 Hence,
$$|\mu_1| \le (n-1) \mu_n -\frac{\alpha \Delta u}{u(1-f)} + n \frac{\tau}{t}.$$
Then,
$$| \xi^{T}   (\alpha P + Q) \xi |\le C K_1 \left(\mu_n -\frac{\alpha \Delta u}{u(1-f)}
+ \frac{\tau}{t} \right)  + C K_1 |W|.
$$
Now we need a version of the Li-Yau gradient estimate for the heat equation coupled with
the Ricci flow.  Indeed,
by  Theorem 2.7 in \cite{BCP:1}, we have
$$\frac{|\nabla u|^2}{u^2} - 2 \frac{ u_t }{u} \le \frac {c_1}t+ c_2 K_0,$$
where $|Ric|_\infty \le K_0$ for some $K_0\ge 0$. With $u_t =\Delta u$, we
get
$$-\frac{\Delta u }{u} \le \frac {c_1}t+c_2 K_0.$$
Since $0\le u<A$, we have
$$\frac{1}{1-f}\le  1.$$
Therefore, at $(p_1, t_1)$,
$$
| \xi^{T}   (\alpha P + Q) \xi | \le C K_1\left (\mu_n
+\frac{1+\tau}{t} \right)+C K_1K_0 + C K_1 |W|.
$$
By the definition of $W=(w_{ij})$, we have
$$|W|\le \frac{|\nabla u|^2}{u^2(1-f)^2}.$$
By  \cite{Z:1} and also \cite{CaH:1}, we have a curvature independent bound
$$|W|\le C \frac1t.$$
Hence
\begin{align*}
| \xi^{T}   (\alpha P + Q) \xi | &\le C K_1 \left (\mu_n
+\frac{1+\tau}{t}\right)+C(K_1K_0+K_1) \\
&\le  C K_1 \left (\lambda
+\frac{1+\tau}{t}\right)+C(K_1K_0+K_1),
\end{align*}
where we used the relation $\mu_n=\mu =
\lambda-\frac{\tau}{t_1} \le \lambda$.
Substituting this in
(\ref{lamb2<}), we arrive at
$$  \frac{2\lambda^{2}}{\alpha^{2}}\le\frac{\tau}{t^2}
+ C K_1 \left (\lambda +\frac{1+\tau}{t} \right)+C(K_1K_0+K_1) + 2 \lambda K_0
\quad\text{at }(p_1,t_1). $$ A simple application of the Cauchy
inequality yields
$$\frac{\lambda}{\alpha}\le 2 \frac{\sqrt{1+\tau}}{t}+ B\quad\text{at }(p_1,t_1), $$
where $B$ is a nonnegative constant depending only on $K_0, K_1$ and $\tau$, with the property that $B=0$ if $K_0=K_1=0$. Then
$$\lambda-\frac\tau t\leq (2\alpha\sqrt{1+\tau}-\tau)\frac1t+\alpha B\le \alpha B\quad\text{at }(p_1,t_1),$$
by choosing $\tau$ sufficiently large. In fact, for $\alpha=4$, we can take
$\tau=8+2\sqrt{17}$.   Recall that $\mu=\lambda-\frac\tau
t$ at $(p_1, t_1)$ is the largest eigenvalue of the $2$ form
$\alpha V + W - \frac{\tau}{t} g(t)$ on $M \times (0, T]$.
Therefore, given any nonzero tangent vector $\eta \in T_x M$, $x
\in M$, it holds
\[
\frac{\eta^{T} (\alpha V + W) \eta - \frac{\tau}{t} g(t)(\eta, \eta)}{g(t)(\eta, \eta)} \le
\alpha B\quad \text{in} \ M\times (0, T).
\] Thus
$$ \frac{t \eta^{T} (\alpha V + W) \eta}{g(t)(\eta, \eta)} \leq \tau+\alpha Bt.$$
Hence
$$t \frac{ \eta^{T} V \eta }{g(t)(\eta, \eta)} \leq \frac\tau\alpha+Bt.$$
This proves part (a) of the theorem.
\medskip

{\bf Part (b).}
Let $\psi$ be a cutoff function supported in the space-time cube
$Q_{R, T}(x_0, t_0)$ such
that $\psi=1$ in the cube of half the size $Q_{R/2, T/2}(x_0, t_0)$. We also require that
\begin{equation}
\label{psiPrprty} |\nabla \psi| \le \frac{C}{R}, \quad |\Delta
\psi | \le C \frac{K_0+1}{R^2}, \quad \frac{|\partial_t \psi
|}{\sqrt{\psi}} \le C \frac{(K_0+1)}{T}, \quad \frac{|\nabla
\psi|^2}{\psi} \le \frac{C}{R^2}.
\end{equation} Here $K_0$ is a bound on $|Ric|$ as before.
Also the quotients are regarded as $0$ when $\psi=0$ somewhere. It
is well known that such a cutoff function exists. See \cite{BCP:1}
e.g. Without loss of generality we just take $t_0=T$. We also
require that $\psi$ is supported in the slightly shorter space
time cube $Q_{R, 3T/4}(x_0, t_0)$. The cut off function can be
constructed from the distance function. Since we will be
differentiating at a fixed point in space time eventually, we can
use the well know trick by Calabi to get around singular points of
the distance.

Now we consider, for some constant $\beta \in \mathbb{R}^+$ to be determined,
the $2$-form
\[
\psi (\alpha V + W)+ \beta f g(t).
\]
Let $\mu$ be an eigenvalue and $\xi$ be a corresponding eigenvector of $\psi (\alpha V + W)+ \beta f g(t)$ at some point $(p, t)$, then
$$ [\psi (\alpha V + W) + \beta f g(t)] \xi = \mu \xi.$$
Here $g(t) \xi$ stands for the dual vector of the one form $g(t)(\cdot, \xi)$.  In a local coordinate, the above becomes
$$ \psi (\alpha V + W) \xi= (\mu - \beta f) \xi. $$
If $\psi (p, t)\neq 0$, then $\xi$ is also an eigenvector of $\alpha V + W$, corresponding
to the eigenvalue $\lambda$. Here we just define $\lambda$ by
$$ \mu = \psi \lambda + \beta f. $$

As in Part (a), we extend $\xi$ to a time independent vector field in a space-time neighborhood by parallel transport, which is still denoted by $\xi$. Now we extend $\mu$ and $\lambda$
to smooth functions in the same neighborhood by the relation
\[
\mu  = \xi^{T} [ \psi (\alpha V + W) + \beta f g(t)] \xi,
\]
and
\[
\lambda = \xi^{T}  (\alpha V + W)  \xi.
\] Therefore  as functions, $\mu$ and $\lambda$ are also related by
\[
\mu = \psi \lambda + \beta f.
\]We observe that at the point $(p, t)$, $\mu$ and $\lambda$ are eigenvalues of
the respective $2$-forms. However, at different points, this may not be the case.

Following the computation in deriving (\ref{L1psilamb}), we know that
\be
\label{psiL1mu=}
 \psi L_1 \mu =\psi L_1(\psi \lambda + \beta f) = \psi^{2} H +
\psi^2\xi^T(\alpha P+Q)\xi+ \psi \lambda L_1\psi + \beta \psi L_1f.
\ee
Here $L_1$ is the operator given in (\ref{defL1}), $H$ is given by (\ref{eqH}) and $P$ and $Q$ are given by (\ref{eqPRF}) and (\ref{eqQRF}) respectively.

Let $\Omega$ be the parabolic cube given by
$ \Omega= Q_{R,T}(x_0,t_0).$
Then
$$ \mu \big|_{\partial_{p} \Omega} = \beta f \big|_{\partial_{p} \Omega} < 0.  $$
Here $\partial_p$ stands for the parabolic boundary.
We will estimate $\mu$ from above.

Since the term $H$ does not involve time derivative, the inequality (\ref{H>}) in the
fixed metric case still stands. Therefore we have
$$ \psi^{2} H \geq \frac{2}{\alpha^{2}} (\psi \lambda)^{2} + \psi^{2} \lambda
\left(w - \frac{4}{\alpha^{2}} \xi^{T} W \xi\right) - f \psi^{2} \lambda \left(w- \frac{4}{\alpha}\xi ^{T} W \xi\right).$$
At points where $\mu \geq 0$, we have
$$ \psi \lambda + \beta f \geq 0,$$
and hence $ \psi \lambda \geq 0$. Then, for $\alpha \ge 4$, it holds
$$ \psi ^{2} H \geq \frac{2}{\alpha^{2}} (\psi \lambda)^{2}.$$
This, (\ref{L1f>}) and (\ref{psiL1mu=}) yields
\begin{align*}
\psi L_1\mu &\geq  \frac{2}{\alpha^{2}} (\psi \lambda)^{2} +
\psi^2\xi^T(\alpha P+Q)\xi+ \beta \left[ \frac{1}{4} \psi |\nabla f|^{2} - C \frac{|\nabla \psi|^{2}}{\psi}\right] \\
&\qquad - \left[ |\partial_{t} \psi| + |\Delta \psi| + \frac{2 |\nabla \psi|^{2}}{\psi}
+2 \sqrt{\psi} |\nabla f| \cdot\frac{\nabla \psi}{\sqrt{\psi}}\right] \psi \lambda.
\end{align*} As in the previous section, we take
\be
\label{beta=}
\beta=c\sup  \frac{|\nabla \psi|^{2}}{\psi},
\ee with $c$ being sufficiently large. Then
 we can use the Cauchy inequality to prove:
\be
\al
\label{psiL1mu>}
\psi L_1 \mu &\geq  \frac{1}{\alpha^2} (\psi \lambda)^{2}
+\psi^2\xi^T(\alpha P+Q)\xi+ \frac{1}{8} \beta \psi |\nabla f|^{2} \\
&\quad\quad - C \left[ |\partial_{t} \psi|+ |\Delta \psi|
+\frac{|\nabla \psi|^{2}}{\psi} \right]^{2}-
C\sup  \frac{|\nabla \psi|^{4}}{\psi^2}.
\eal
\ee

Let $\mu_{0}$ be a maximal eigenvalue of $\psi (\alpha V+ W) + \beta f g(t)$ in $\Omega$, which associates with a unit eigenvector $\xi$.
Assume $\mu_{0}$ is taken at the space-time point $(p_1, t_1)$.
We are interested only in the case $\mu_{0} \geq 0$.
 Just like in Part (a), under $g(t_1)$, we use
parallel translation along geodesics emanating from $p_1$ to
extend $\xi$ to a smooth vector field in a neighborhood of $p_1$.
We still denote it by $\xi$. Now we regard $\xi$ as a time
independent vector field defined in a space time neighborhood of
$(p_1, t_1)$.
 Set
$$ \mu =  \xi^{T} [\psi (\alpha V+ W) + \beta f g(t)]  \xi.$$
 Since $\frac{\mu}{g(t)(\xi, \xi)}$ has its
nonnegative maximum at $(p_1, t_1)$, we have,  at this point,
\begin{equation}
\label{ddmu+dmu}
\Delta
\mu = \Delta (\mu/g(t)(\xi, \xi)) \le 0,
\qquad
\nabla \mu= \nabla (\mu/g(t)(\xi, \xi))=0,
\end{equation} and
\[
\partial_t \left[ \frac{\mu}{g(t)(\xi, \xi)} \right] \ge
0.
\] Therefore
\[
  \frac{\partial_t \mu}{g(t)(\xi, \xi)} +
\mu \frac{2
Ric (\xi, \xi)}{ |g(t)(\xi, \xi)|^2} \ge 0.
\]Since the computation is at $(p_1, x_1)$, this implies
\[
\partial_t \mu + \mu 2
Ric (\xi, \xi) \ge 0.
\]
Recall from (\ref{defL1}) that
$$L_1 = - \partial_{t} + \Delta - \frac{2 f}{1-f} \nabla f \cdot \nabla - \frac{2 \nabla \psi}{\psi} \ \nabla.$$ Hence we can plug the above inequality and (\ref{ddmu+dmu}) in (\ref{psiL1mu>}) to deduce
\begin{align*}
\mu \psi 2 Ric (\xi, \xi) \geq \psi L_1 \mu &\geq \frac{1}{\alpha^{2}} (\psi \lambda)^{2} +\psi^2\xi^T(\alpha P+Q)\xi\\
&\qquad- C \left[ |\partial_{t} \psi| + |\Delta \psi| +
\frac{|\nabla \psi|^{2}}{\psi} \right]^{2}- C\sup  \frac{|\nabla
\psi|^{4}}{\psi^2}.\end{align*}
This implies, at $(p_1, t_1)$, \be
\label{(psilamb)2<}
 \frac{1}{\alpha^{2}} (\psi \lambda)^{2}
 \le\psi^2|\xi^T(\alpha P+Q)\xi|+C \left(\frac{1}{T} + \frac{1}{R^{2}} \right)^2
  + \psi \mu 2 K_0.
\ee Now we control the right hand side. From (\ref{aP+Q<}) in Part
(a), we have
\[
\al
&\psi | \xi^{T}   (\alpha P + Q) \xi |\\
&\quad\le   \left| \xi_i \xi_j  \left[- 2
R_{kijl} \psi (\alpha v_{kl} + w_{kl}) + R_{il} \psi (\alpha v_{jl}  + w_{jl})+ R_{jl} \psi (\alpha v_{il}+ w_{il}) \right] \right|  \\
&\qquad\quad+ \psi \left| \xi_i \xi_j  \left[- 2 R_{kijl}  w_{kl} +
R_{il}  w_{jl}+ R_{jl}  w_{il} \right] \right|  + C\psi |Ric| |W| \\
&\le \left| \xi_i \xi_j  \left[- 2
R_{kijl} \psi(\alpha v_{kl} + w_{kl}) + R_{il} \psi(\alpha v_{jl}  + w_{jl})+ R_{jl} \psi (\alpha v_{il}+ w_{il}) \right] \right|\\
&\qquad + C (K_0+K_1) \psi |W| \\
&\le \big| \xi_i \xi_j  \big[- 2
R_{kijl} \{ \psi (\alpha v_{kl} + w_{kl})- \beta f \delta_{kl}\} + R_{il} \{\psi (\alpha v_{jl}  + w_{jl}- \beta f \delta_{kl}\}\\
&\qquad \qquad + R_{jl} \{\psi (\alpha v_{il}+ w_{il} - \beta f \delta_{kl}\} \big] \big| \\
& \qquad +  C (K_0+K_1) \beta \psi |f|+ C (K_0+K_1) \psi |W|.
\eal
\] Let $\mu_1, ..., \mu_n$ be the eigenvalues of the $2$-form $\psi(\alpha V + W) + \beta f g(t)$
at the space-time point $(p_1, t_1)$, which are listed in increasing order. We assume without loss
of generality that $\mu_1<0$. Then the above inequality implies
\[
\psi | \xi^{T}   (\alpha P + Q) \xi | \le C (K_0 + K_1) (\mu_n + |\mu_1|) +
 C (K_0+K_1) \beta  |f|+ C (K_0+K_1) \psi |W|.
\]
Observe that
\begin{align*} \mu_1 + (n-1) \mu_n &\ge \mu_1+\cdots+\mu_n \\
&= \operatorname{tr}\left[  \psi\left(\frac{\alpha
u_{ij}}{u(1-f)}+\frac{u_iu_j}{u^2(1-f)^2} \right) + \beta f \delta_{ij} \right] \ge \psi \frac{\alpha
\Delta u}{u(1-f)}  + n \beta f .
\end{align*}
 Hence,
$$|\mu_1| \le (n-1) \mu_n -\psi \frac{\alpha \Delta u}{u(1-f)} + n \beta |f|.$$
Therefore,
\be
\label{psi(aP+Q)<} \psi | \xi^{T}   (\alpha P + Q)
\xi |\le C K_1 \left(\mu_n - \psi \frac{\alpha \Delta u}{u(1-f)}
 \right) + C K_1 \psi |W| + C K_1 \beta |f|.
\ee
 By  Theorem 2.7 in \cite{BCP:1} again, we have
$$
 \psi
\left(\frac{|\nabla u|^2}{u^2} - 2 \frac{ u_t }{u} \right)
 \le c_1 \left(\frac {1}{1+t} + \frac{1}{R^2} \right)+ c_2 K_0,
 $$
where $K_0=|Ric|_\infty$. We mention that this inequality is not
exactly the one stated in their Theorem 2.7, due to the appearance
of the cutoff function $\psi$ in the front and the appearance of
$\frac {1}{1+t}$ instead of $\frac1t$. However, this is actually
what was first proved at the first line in p3532 there. This is
also the case for the Li-Yau inequality in the fixed metric case.
Since, by construction, $\psi$ is supported in the slightly
shorter space time cube $Q_{R, 3T/4}(x_0, t_0)$ and $t_0=T$, we
have
\[
\psi \left(\frac{|\nabla u|^2}{u^2} - 2 \frac{ u_t }{u} \right)
 \le c_3 \left(\frac {1}{T} + \frac{1}{R^2} \right)+ c_2 K_0.
\]
Using $u_t =\Delta u$, we get
$$-\psi \frac{\Delta u }{u} \le c_3 \left(\frac {1}{T} + \frac{1}{R^2} \right)+ c_2 K_0.$$
Since $0\le u<A$, we have
$$\frac{1}{1-f}\le  1.$$
Substituting these in (\ref{psi(aP+Q)<}), we find that, at $(p_1,
t_1)$, it holds
$$
\psi | \xi^{T}   (\alpha P + Q) \xi | \le C K_1\left (\lambda +c_3
\left(\frac {1}{T} + \frac{1}{R^2} \right)+ c_2 K_0\right) + C K_1
\psi |W| + C K_1 \beta |f|.
$$ Here we also used the inequality $\mu_n=\mu =\psi \lambda +
\beta f \le \psi \lambda$. By the definition of $W=(w_{ij})$, we
have
$$|W|\le \frac{|\nabla u|^2}{u^2(1-f)^2}.$$
According to Theorem 2.2 in \cite{BCP:1},
we obtain
$$\psi |W|\le C \left( \frac{1}{t+1} + \frac{1}{R^2} +K_0 \right) \le
C \left( \frac{1}{T} + \frac{1}{R^2} +K_0 \right) .$$ Again the
cutoff function $\psi$ and term $\frac{1}{t+1}$ were not in the
original statement. But this was proven in that paper on the way
to prove Theorem 2.2 there. Also the last inequality follows from
the choice of the support for $\psi$. Now we know that
$$
\psi^2 | \xi^{T}   (\alpha P + Q) \xi |\le C K_1 \psi \lambda + C
K_1 \frac{1}{R^2} + C (K_1 +1) K_0 + C K_1 \beta \psi |f|.
$$ Substituting this in (\ref{(psilamb)2<}), we conclude, at $(p_1, t_1)$,
\begin{align*} \frac{1}{\alpha^{2}}
(\psi \lambda)^{2}&\le C K_1\psi \lambda +CK_1 K_0 + C K_1
\left(1+\frac{1}{T}+\frac{1}{R^2}\right)
+C\left(\frac1T+\frac{1}{R^2}\right)^2\\
&\qquad+C K_1 \beta \psi |f| + 2K_0 \psi \mu.
\end{align*} As $\mu = \psi \lambda + \beta f \le \psi \lambda$ and $K_0 \le C K_1$, this yields
$$\psi \lambda\le C \left(\frac1T+\frac{1}{R^2}+B\right)+C \sqrt{K_1 \beta \psi |f|},$$
where $B$ is a nonnegative constant depending only on $K_0, K_1$
such that $B=0$ when $K_0=K_1=0$. This shows
\[
\mu = \psi \lambda + \beta f \le C
\left(\frac1T+\frac{1}{R^2}+B\right)+C \sqrt{K_1 \beta \psi |f|} +
\beta f.
\]

Since $f<0$, we know that $C \sqrt{K_1 \beta \psi |f|} + \beta f
\le C K_1$, which implies, at $(p_1, t_1)$, that
\[
\mu = \psi \lambda + \beta f \le C
\left(\frac1T+\frac{1}{R^2}+B\right).
\]
Recall $\mu$ at $(p_1, t_1)$ is the maximum of the eigenvalues of
the $2$-form $\psi(\alpha V + W) + \beta f$ in the cube $Q_{R,
T}(x_0, t_0)$. Hence for any
 unit tangent vector $\xi$ at a space-time point $(x, t) \in Q_{R, T}(x_0,
t_0)$, it holds
$$ \psi \xi^{T} (\alpha V + W) \xi + \beta f \leq
C \left( \frac{1}{T} + \frac{1}{R^{2}} +B\right), \quad
$$ or
$$ \psi \xi^{T} (\alpha V + W) \xi \leq
C \left(\frac{1}{T} + \frac{1}{R^{2}} +B\right)+ \beta |f|.
$$
With the choice of $\beta = c\sup  \frac{|\nabla
\psi|^{2}}{\psi}=\frac{c C}{R^2}$ in (\ref{beta=}) with $c$
sufficiently large, we then have
$$ \psi \xi^{T} V \xi \leq C \left( \frac{1}{T} + \frac{1}{R^{2}} +B\right) (1-f),$$
and hence
$$ \psi \frac{u_{ij} \xi_{i} \xi_{j}}{u} \leq C \left( \frac{1}{T} + \frac{1}{R^{2}} +B\right) (1-f)^{2}.$$
This implies the desired estimate.

\qed

\end{document}